%
%
%
%

\documentclass{amsart}
\usepackage{amsmath}
\usepackage{amssymb}

\usepackage[numbers,sort&compress]{natbib}

\usepackage{graphicx}
\usepackage{epstopdf}

\newtheorem{theorem}{Theorem}

\newtheorem{Proposition}[theorem]{Proposition}
\newtheorem{corollary}[theorem]{Corollary}
\theoremstyle{definition}
\newtheorem{definition}[theorem]{Definition}
\newtheorem{example}[theorem]{Example}

\theoremstyle{Remark}
\newtheorem{remark}[theorem]{Remark}

\numberwithin{equation}{section}    



\begin{document}
\title[On the numerical radius parallelism and Birkhoff orthogonality]{On the numerical radius parallelism and the numerical radius Birkhoff orthogonality}

\author[J.Y. BI]{Jiaye Bi}
\address{Department of Mathematics,
	Sun Yat-sen University, Guangzhou, 510275, P. R. China
}
\curraddr{}
\email{bijiaye@mail2.sysu.edu.cn}

\author[H.Y. Xie ]{Huayou Xie}
\address{Department of Mathematics,
	Sun Yat-sen University, Guangzhou, 510275, P. R. China
}
\email{xiehy33@mail2.sysu.edu.cn}

\thanks{$*$ Corresponding author}
\author[Y.J. LI]{Yongjin Li$^{*}$}
\address{Department of Mathematics,
	Sun Yat-sen University, Guangzhou, 510275, P. R. China
}
\curraddr{}
\email{stslyj@mail.sysu.edu.cn}

\subjclass{46B20}

\date{}

\dedicatory{}

\begin{abstract}
In this paper, we generalize the notions of numerical radius parallelism and numerical radius Birkhoff orthogonality, originally formulated for operators on Hilbert spaces, to operators on normed spaces. We then proceed to demonstrate their fundamental properties. Notably, our findings reveal that numerical radius parallelism lacks transitivity, and numerical radius Birkhoff orthogonality is neither left nor right additive. Additionally, we offer characterizations for both concepts. Furthermore, we establish a connection between numerical radius parallelism and numerical radius Birkhoff orthogonality.
\end{abstract}
\keywords{numerical radius parallelism; numerical radius Birkhoff orthogonality; norm parallelism; Birkhoff orthogonality}
\maketitle

\section{Introduction}
Throughout this paper, we assume that $X$ is a normed space over the field $\mathbb{F}\in\{\mathbb{R},\mathbb{C}\}$ of dimension at least 2, $B_X,S_X$ denote the closed unit ball and the unit sphere of $X$, respectively, $\mathbb{T}:=\{\lambda\in\mathbb{F}:|\lambda|=1\}$, and $\mathbb{B}(X)$ denote the normed space of all bounded linear operators on $X$.

The concept of \emph{norm parallelism}, first introduced by Seddik \cite{Seddik}, defines an element $x\in X$ as being \emph{norm parallel} to $y\in X$, denoted as $x\parallel y$, if there exists $\lambda\in \mathbb{T}$ such that the equality $\|x+\lambda y\|=\|x\|+\|y\|$ is satisfied. This notion generalizes the concept of linear dependence, as elements that are linearly dependent are necessarily norm parallel. Conversely, the implication holds true only in the context of strictly convex spaces, as established in \cite[Theorem 2.8]{Mal}. Zamani and Moslehian \cite{ContinuousFunctionsSpaces,Zamani} contributed significantly to the characterizations of the norm parallelism in Hilbert $C^*$-modules. In particular, they established a relation between norm parallelism and Birkhoff orthogonality \cite{Birkhoff}. Recall that a vector $x$ is said to be Birkhoff orthogonal to another vector $y$, denoted by $x\perp_B y$,  if and only if $\|x+\alpha y\|\geq\|x\|$ holds for all $\alpha\in\mathbb{R}$.
\begin{theorem}\rm{\cite[Theorem\;2.4]{Zamani}}\label{T-1}
	Let $X$ be a normed space. For any $x,y\in X$, there exists $\lambda\in \mathbb{T}$ such that the following statements are equivalent:
	
	$\mathrm{(i)}$ $x\parallel y$.
	
	$\mathrm{(ii)}$ $x\perp_B \|y\|x+\lambda\|x\|y$.
	
	$\mathrm{(iii)}$ $y\perp_B \|x\|y+\bar{\lambda}\|y\|x$.
\end{theorem}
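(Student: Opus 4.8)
The plan is to route everything through supporting functionals furnished by the Hahn--Banach theorem, exploiting only the elementary bound $\|w\|\ge\operatorname{Re}g(w)$ whenever $g\in X^{*}$ with $\|g\|\le 1$. First I would dispose of the degenerate cases $x=0$ and $y=0$, in which the three statements hold trivially (both displayed vectors collapse to $0$), so from now on $x,y\neq 0$. The one preliminary observation I would make is that, since $\mathbb{T}$ is closed under negation, $x\parallel y$ is equivalent to the existence of $\lambda\in\mathbb{T}$ with $\|x-\lambda y\|=\|x\|+\|y\|$; \emph{this} $\lambda$ is exactly the one I will feed into (ii) and (iii), which is what lets a single $\lambda$ serve all three statements.

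For the forward implications (i)$\Rightarrow$(ii) and (i)$\Rightarrow$(iii), I would pick by Hahn--Banach a functional $f\in X^{*}$ with $\|f\|=1$ and $f(x-\lambda y)=\|x-\lambda y\|=\|x\|+\|y\|$. Since $\operatorname{Re}f(x)\le\|x\|$ and $\operatorname{Re}f(-\lambda y)\le\|y\|$ while their sum equals $\|x\|+\|y\|$, both inequalities are equalities, and using $|f(x)|\le\|x\|$, $|f(y)|\le\|y\|$ I would upgrade these to $f(x)=\|x\|$ and $f(y)=-\bar\lambda\|y\|$. A direct substitution then gives $f\big(\|y\|x+\lambda\|x\|y\big)=0$ and $f\big(\|x\|y+\bar\lambda\|y\|x\big)=0$, whence for every real $\alpha$ one has $\|x+\alpha(\|y\|x+\lambda\|x\|y)\|\ge|f(x)|=\|x\|$ and $\|y+\alpha(\|x\|y+\bar\lambda\|y\|x)\|\ge|f(y)|=\|y\|$. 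These are precisely (ii) and (iii), realized with the same $\lambda$ and the same witnessing $f$.

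For the reverse implications (ii)$\Rightarrow$(i) and (iii)$\Rightarrow$(i), the essential input is the James characterization of Birkhoff orthogonality: $u\perp_B v$ yields a functional $g\in X^{*}$ with $\|g\|=1$, $g(u)=\|u\|$ and $\operatorname{Re}g(v)=0$. In the complex setting I would deduce this by applying the real version in the underlying real space $X_{\mathbb{R}}$ (matching the real-scalar definition of $\perp_B$ used in the paper) and then complexifying via $g(\cdot)=\phi(\cdot)-i\phi(i\,\cdot)$. Applied to $x\perp_B(\|y\|x+\lambda\|x\|y)$ this gives $g(x)=\|x\|$ and $\operatorname{Re}(\lambda g(y))=-\|y\|$; as $|\lambda g(y)|\le\|y\|$, this forces $\lambda g(y)=-\|y\|$, i.e. $g(y)=-\bar\lambda\|y\|$. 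Then $g(x-\lambda y)=\|x\|+\|y\|$, so $\|x-\lambda y\|\ge\|x\|+\|y\|$, and the triangle inequality closes it to equality, giving $x\parallel y$. The derivation of (iii)$\Rightarrow$(i) is identical with the roles of $x$ and $y$ interchanged. Finally, to match the exact quantifier structure of the statement, I would note: if (i) holds, the $\lambda$ above makes (ii) and (iii) hold as well, so all three are true; if (i) fails, the reverse implications show (ii) and (iii) fail for every $\lambda$, so any $\lambda$ renders all three false. Either way a suitable $\lambda$ making them equivalent exists.

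I expect the main obstacle to be the reverse direction, where I must \emph{produce} a supporting functional out of Birkhoff orthogonality rather than merely use one, and must be careful that it is the complex, real-scalar version of the James lemma consistent with the paper's definition of $\perp_B$. The forward direction is comparatively routine Hahn--Banach bookkeeping. A secondary, recurring subtlety is the repeated upgrade of $\operatorname{Re}f(\cdot)\le\|\cdot\|$ to genuine equality via extremality, since it is exactly this step that pins $f(y)$ (resp. $g(y)$) down to the unimodular factor $-\bar\lambda\|y\|$ and thereby keeps one and the same $\lambda$ valid across all three statements.
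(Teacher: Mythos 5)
This statement is quoted in the paper verbatim from \cite{Zamani} as background material and is not proved there, so there is no in-paper argument to compare yours against; I can only judge your proposal on its own merits, and it is correct. Your route is the standard one for this result: after disposing of the degenerate cases and normalizing the sign of $\lambda$, the forward direction is Hahn--Banach support-functional bookkeeping with the extremality upgrade $\operatorname{Re}f(x)=\|x\|,\ |f(x)|\le\|x\| \Rightarrow f(x)=\|x\|$ (which correctly pins down $f(y)=-\bar\lambda\|y\|$ and makes the displayed vectors lie in $\ker f$), and the reverse direction rests on James' characterization of Birkhoff orthogonality, which you correctly apply in the underlying real space and then complexify via $g(\cdot)=\phi(\cdot)-i\phi(i\,\cdot)$, so that only $\operatorname{Re}g(v)=0$ (not $g(v)=0$) is available---exactly the care the real-scalar definition of $\perp_B$ in this paper requires. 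Your reading of the slightly awkward quantifier structure (``there exists $\lambda$ making the three statements equivalent''), including the observation that when (i) fails the reverse implications kill (ii) and (iii) for \emph{every} $\lambda$, is also the right one. The only ingredient you import without proof is the real James lemma itself; since its proof is the routine two-dimensional Hahn--Banach extension of $f_0(\alpha u+\beta v)=\alpha\|u\|$, citing it is acceptable, though a fully self-contained write-up should include that half-page argument.
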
 
Furthermore, Zamani \cite{ContinuousFunctionsSpaces} conducted an investigation into the characterization of norm parallelism within specific spaces of continuous functions. Wojcik \cite{Wojcik} offered characterizations of norm parallelism for bounded linear operators between normed spaces and introduced a fascinating application pertinent to the invariant subspace problem. For those seeking a more profound understanding of norm parallelism, it is recommended to consult the references \cite{Mehrazin,Wojcik2,Zamani3}.

In Section \ref{s2}, we introduce the concept of \emph{numerical radius parallelism} in $\mathbb{B}(X)$ and establish some characterizations. Additionally, we show that numerical radius parallelism does not coincide with the linear dependence and lacks transitivity.

In Section \ref{s3}, we introduce the concept of \emph{numerical radius Birkhoff orthogonality} in $\mathbb{B}(X)$ and offer a characterization. Furthermore, we demonstrate that numerical radius Birkhoff orthogonality is neither additive on the left nor on the right. Finally, we provide a characterization of numerical radius parallelism in terms of numerical radius Birkhoff orthogonality.

\section{Numerical radius parallelism in $\mathbb{B}(X)$}\label{s2}
The \emph{numerical radius} of $T\in\mathbb{B}(H)$, where $(H,\left\langle\cdot,\cdot\right\rangle )$ is a complex Hilbert space, is given by
$$\omega(T):=\sup\{|\left\langle Tx,x \right\rangle|:x\in S_H \}.$$
The \emph{numerical radius} of bounded linear operators on a normed space $X$ (cf. \cite{Duncan}) is the semi-norm defined on $\mathbb{B}(X)$ by  
$$v(T):=\sup\{|x^*(Tx)|:x\in S_X,x^*\in J(x)\} \quad(T\in\mathbb{B}(X)),$$
where 
$J(x):=\{x^*\in S_{X^*}:x^*(x)=\|x\|\}$
is the \emph{duality mapping} for $x$.
Clearly, for any $x\in S_H$, there exist a unique $x^*\in S_{H^*}$ such that $x^*(x)=1$, that is $x^*=\left\langle \cdot,x \right\rangle $. Consequently, for $T\in\mathbb{B}(H)$, we have $v(T)=\omega(T)$. This demonstrates that the concept of the numerical radius of bounded operators on a Hilbert space generalizes to the numerical radius of bounded operators on a normed space.

In the context of $\mathbb{B}(X)$, the alternative Daugavet equation \cite{Martin}
$$\max_{\lambda\in \mathbb{T}}\|I+\lambda T\|=1+\|T\|$$ is a particular case of norm parallelism. In \cite{Duncan}, it was shown that, the previous equation holds if and only if $v(T)=\|T\|$.

Mehrazin $\mathit{et\;al.}$ \cite{Mehrazin} introduced the concept of numerical radius parallelism for Hilbert space operators and established a characterization, as follows.
\begin{definition}\cite[Definition 1.1]{Mehrazin}
	Let $H$ be a complex Hilbert space. An element $T \in \mathbb{B}(H)$ is called \emph{numerical radius parallel} to another  
	element $S \in \mathbb{B}(H)$, denoted by $T \parallel_{\omega} S$, if  
	$\omega(T + \lambda S) = \omega(T) + \omega(S)$ for some $\lambda \in \mathbb{T}$.
\end{definition}

\begin{theorem}\rm{\cite[Theorem 2.2]{Mehrazin}}\label{Mehrazin}
	Let $(H,\left\langle\cdot,\cdot\right\rangle )$ be a complex Hilbert space and $T, S \in \mathbb{B}(H)$. Then the following conditions are equivalent:  
	
	$\mathrm{(i)}$ $T \parallel_{\omega} S$.  
	
	$\mathrm{(ii)}$ There exists a sequence of unit vectors $\{x_n\}$ in $H$ such that  
	\[  
	\lim_{n \to \infty} |\langle Tx_n, x_n \rangle\langle Sx_n, x_n \rangle| = \omega(T)\omega(S).  
	\]  
	In addition, if $\{x_n\}$ is a sequence of unit vectors in $H$ satisfying $\mathrm{(ii)}$, then it also  
	satisfies  
	\[  
	\lim_{n \to \infty} |\langle Tx_n, x_n \rangle| = \omega(T) \quad \text{and} \quad \lim_{n \to \infty} |\langle Sx_n, x_n \rangle| = \omega(S).  
	\]
\end{theorem}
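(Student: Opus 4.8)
The plan is to work throughout with the two scalar sequences $a_n := \langle Tx_n, x_n\rangle$ and $b_n := \langle Sx_n, x_n\rangle$ attached to a sequence $\{x_n\}$ of unit vectors, exploiting three basic facts: that $\omega(\lambda S) = \omega(S)$ for $\lambda \in \mathbb{T}$, that $\omega$ satisfies the triangle inequality, and that on a complex Hilbert space $\omega$ is a genuine norm (so $\omega(T) = 0$ iff $T = 0$). The degenerate case in which $T$ or $S$ is zero makes both (i) and (ii) hold trivially, so I would assume $\omega(T), \omega(S) > 0$; this positivity is precisely what the supplementary limit assertions require. The recurring difficulty is that in infinite dimensions the supremum defining $\omega$ need not be attained, so every step must be phrased with approximating sequences rather than with maximizing vectors.

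For (i) $\Rightarrow$ (ii) I would fix $\lambda \in \mathbb{T}$ with $\omega(T + \lambda S) = \omega(T) + \omega(S)$ and choose unit vectors $x_n$ with $|a_n + \lambda b_n| \to \omega(T + \lambda S)$. Since $|a_n + \lambda b_n| \le |a_n| + |b_n| \le \omega(T) + \omega(S)$ for every $n$, these three quantities are forced together, so $|a_n| + |b_n| \to \omega(T) + \omega(S)$; combined with the individual bounds $|a_n| \le \omega(T)$ and $|b_n| \le \omega(S)$ this forces $|a_n| \to \omega(T)$ and $|b_n| \to \omega(S)$, whence $|a_n b_n| \to \omega(T)\omega(S)$, which is (ii). The supplementary assertion is then proved in the converse direction: given any unit-vector sequence realising (ii), pass to a subsequence along which $|a_n| \to \alpha$ and $|b_n| \to \beta$ (possible as both are bounded). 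Then $\alpha \le \omega(T)$, $\beta \le \omega(S)$, and $\alpha\beta = \omega(T)\omega(S)$, and with $\omega(T), \omega(S) > 0$ this chain can only be tight when $\alpha = \omega(T)$ and $\beta = \omega(S)$. As every subsequence admits a further subsequence with these same limits, the full sequences converge to $\omega(T)$ and $\omega(S)$.

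For (ii) $\Rightarrow$ (i) I would start from a sequence satisfying (ii), which by the previous paragraph already obeys $|a_n| \to \omega(T)$ and $|b_n| \to \omega(S)$. Since $\omega(T) > 0$, eventually $a_n \neq 0$, so I may write $a_n = |a_n| e^{i\theta_n}$ and $b_n = |b_n| e^{i\phi_n}$. The key device is the compactness of the circle $\mathbb{T}$ (not of the unit sphere, which fails here): pass to a subsequence along which $e^{i\theta_n} \to e^{i\theta}$ and $e^{i\phi_n} \to e^{i\phi}$, so that $a_n \to \omega(T)e^{i\theta}$ and $b_n \to \omega(S)e^{i\phi}$. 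Setting $\lambda := e^{i(\theta - \phi)} \in \mathbb{T}$ aligns the phases and gives $a_n + \lambda b_n \to (\omega(T) + \omega(S))e^{i\theta}$. Because $a_n + \lambda b_n = \langle (T + \lambda S)x_n, x_n\rangle$, this yields $\omega(T + \lambda S) \ge \omega(T) + \omega(S)$, while the triangle inequality together with $\omega(\lambda S) = \omega(S)$ gives the reverse bound; hence $T \parallel_{\omega} S$.

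I expect the main obstacle to be this phase-alignment step in (ii) $\Rightarrow$ (i): since $\omega$ is a generally non-attained supremum and the unit ball of $H$ is not norm-compact, one cannot simply select a maximizing vector and rotate it. The resolution is that the only compactness actually needed is that of $\mathbb{T}$, applied to the phases $e^{i\theta_n}$ and $e^{i\phi_n}$, so that a single scalar $\lambda$ realises the parallelism in the limit. A secondary point to monitor is that the supplementary limits, and with them the whole equivalence, genuinely require $\omega(T), \omega(S) > 0$, so the trivial zero case should be separated out at the very start.
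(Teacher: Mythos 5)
Your proof is correct and takes essentially the same route as the paper: the paper establishes the normed-space generalization (Theorem \ref{v1}) via exactly the three devices you use — a triangle-inequality squeeze for (i)$\Rightarrow$(ii), positivity of $\omega(T)\omega(S)$ to recover the individual limits from the product limit, and compactness of $\mathbb{T}$ to align phases and produce $\lambda=\mu_1\overline{\mu_2}$ for the converse — and then recovers Theorem \ref{Mehrazin} by specializing $x^*=\left\langle\cdot,x\right\rangle$ in the Hilbert-space setting. Your only deviations (working directly with $\langle Tx_n,x_n\rangle$, and a subsequence-uniqueness argument in place of the paper's direct inequality for the supplementary limits) are cosmetic.
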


We now extend the concept of numerical radius parallelism, which was originally defined on $\mathbb{B}(H)$, to $\mathbb{B}(X)$.
\begin{definition}
	An element $T \in \mathbb{B}(X)$ is called \emph{numerical radius parallel} to another  
	element $S \in \mathbb{B}(X)$, denoted by $T \parallel_{v} S$, if  
	$v(T + \lambda S) = v(T) + v(S)$ for some $\lambda \in \mathbb{T}$.
\end{definition}
\begin{remark}
	Since $A\mapsto v(A)$ is a semi-norm on $\mathbb{B}(X)$, the condition $T \parallel_{v} S$ holds whenever $v(T + \lambda S) \geq v(T) + v(S)$ for some $\lambda \in \mathbb{T}$. In particular, if $T,S\in\mathbb{B}(X)$ satisfy $v(T)v(S)=0$, then $v(T+S)\geq |v(T)-v(S)|=v(T)+v(S)$ and hence $T \parallel_{v} S$.
\end{remark}

If $T, S \in \mathbb{B}(X)$ are linearly dependent, say $S=\alpha T$ for some $\alpha\neq0$. Then
$$v\left(T+\frac{\overline{\alpha}}{|\alpha|}S\right)=v(T+|\alpha|T)= (1+|\alpha|)v(T)=v(T)+v(S).$$
Consequently, $T \parallel_{v} S$. However, the converse of this statement is invariably false, as established in the following theorem.

\begin{theorem}\label{linear dependence}
	Let $X$ be a normed space of dimension at least 2. Then the numerical radius parallelism in $\mathbb{B}(X)$ dose not coincide with the linear dependence.
\end{theorem}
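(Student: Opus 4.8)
The plan is to prove something substantially stronger than the existence of a single counterexample: the identity operator $I$ is numerical radius parallel to \emph{every} $S\in\mathbb{B}(X)$. Since $\dim X\geq 2$ guarantees the existence of operators that are not scalar multiples of $I$, this immediately produces linearly independent pairs that are numerical radius parallel, which is exactly what is needed to separate $\parallel_v$ from linear dependence.

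First I would record that $v(I)=1$, since $x^*(Ix)=x^*(x)=\|x\|=1$ for every $x\in S_X$ and $x^*\in J(x)$. Next, for a fixed $S\in\mathbb{B}(X)$ and $\lambda\in\mathbb{T}$, the identity $x^*((I+\lambda S)x)=1+\lambda\,x^*(Sx)$ gives
\[
v(I+\lambda S)=\sup\{\,|1+\lambda\,x^*(Sx)|:x\in S_X,\ x^*\in J(x)\,\}.
\]
The core step is to interchange the two suprema: using $\sup_{\lambda\in\mathbb{T}}|1+\lambda w|=1+|w|$ for every scalar $w\in\mathbb{F}$ (in the real case this is just $\max(|1+w|,|1-w|)=1+|w|$), I obtain
\[
\sup_{\lambda\in\mathbb{T}}v(I+\lambda S)=\sup_{x,x^*}\,\sup_{\lambda\in\mathbb{T}}|1+\lambda\,x^*(Sx)|=\sup_{x,x^*}\bigl(1+|x^*(Sx)|\bigr)=1+v(S)=v(I)+v(S).
\]
The reverse inequality $v(I+\lambda S)\leq v(I)+v(S)$ is merely the triangle inequality for the seminorm $v$, so the displayed supremum equals $v(I)+v(S)$ exactly.

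The one genuine subtlety — and the step I expect to be the main obstacle — is that the definition of $\parallel_v$ demands an \emph{actual} $\lambda\in\mathbb{T}$ attaining the value $v(I)+v(S)$, not merely a supremum over $\lambda$. I would resolve this by compactness: the map $\lambda\mapsto v(I+\lambda S)$ is $v(S)$-Lipschitz, hence continuous, on the compact set $\mathbb{T}$, so its supremum is attained at some $\lambda_0$, giving $v(I+\lambda_0 S)=v(I)+v(S)$ and therefore $I\parallel_v S$.

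Finally, to conclude, I would exhibit a concrete linearly independent pair. Fixing $x_0\in S_X$ and $x_0^*\in J(x_0)$ and setting $Sy=x_0^*(y)\,x_0$ produces a nonzero rank-one operator; it cannot equal $cI$ for any scalar $c$, since $c=0$ would force $S=0$ (whereas $Sx_0=x_0\neq 0$) and $c\neq 0$ would force $\dim X=1$. Thus $I$ and $S$ are linearly independent yet $I\parallel_v S$, so numerical radius parallelism does not coincide with linear dependence, completing the argument.
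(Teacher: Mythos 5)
Your proof is correct, and its high-level architecture coincides with the paper's: the paper likewise reduces the theorem to a lemma asserting that the identity is numerical radius parallel to everything (its Proposition \ref{PvI}, stated as $T \parallel_{v} \alpha I$ for every $T\in\mathbb{B}(X)$ and $\alpha\in\mathbb{F}$, which by symmetry and homogeneity of $\parallel_{v}$ is exactly your claim $I\parallel_{v} S$), and then pairs $I$ with the rank-one operator $x^*(\cdot)x$ as the linearly independent witness. Where you genuinely diverge is in the proof of that lemma. The paper argues sequentially: it picks a maximizing sequence $(x_n,x_n^*)$ for $v(T)$, writes $x_n^*(Tx_n)=\lambda_n|x_n^*(Tx_n)|$, passes to a subsequence along which the phases $\lambda_n$ converge to some $\lambda\in\mathbb{T}$, and then checks directly that this $\lambda$, rotated by $\overline{\alpha}/|\alpha|$, witnesses $v(T+\lambda\overline{\alpha}/|\alpha|\cdot\alpha I)\geq v(T)+v(\alpha I)$. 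You instead exploit the exact identity $x^*\left[(I+\lambda S)x\right]=1+\lambda x^*(Sx)$, interchange the suprema over $\lambda$ and over $(x,x^*)$ via $\sup_{\lambda\in\mathbb{T}}|1+\lambda w|=1+|w|$, and then recover an honest witness $\lambda_0$ from the $v(S)$-Lipschitz continuity of $\lambda\mapsto v(I+\lambda S)$ together with compactness of $\mathbb{T}$. Both proofs ultimately lean on compactness of $\mathbb{T}$, but at different points: the paper applies it to the phases of the functional values, you apply it to the parallelism parameter itself. Your route is cleaner --- no subsequence bookkeeping, and the attainment issue (the one real subtlety) is isolated and dispatched in one line --- but it is tied to one of the operators being $I$, since the sup-interchange needs $x^*(Ix)\equiv1$ on the relevant pairs; the paper's phase-extraction technique is the one that scales to general pairs of operators, and indeed it reappears essentially verbatim in the proof of Theorem \ref{v1}.
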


To establish this theorem, we need the following proposition.
\begin{Proposition}\label{PvI}
	Let $T \in \mathbb{B}(X)$. Then $T \parallel_{v} \alpha I$ for all $\alpha\in\mathbb{F}$.
\end{Proposition}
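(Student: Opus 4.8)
The plan is to prove that $T \parallel_v \alpha I$ for every $\alpha \in \mathbb{F}$ and every $T \in \mathbb{B}(X)$, which means exhibiting some $\lambda \in \mathbb{T}$ with $v(T + \lambda \alpha I) = v(T) + v(\alpha I)$. The first observation I would make is that the identity operator has a very transparent numerical radius: since $x^*(Ix) = x^*(x) = \|x\| = 1$ for $x \in S_X$ and $x^* \in J(x)$, we get $v(I) = 1$, and hence $v(\alpha I) = |\alpha| \, v(I) = |\alpha|$. So the target equality becomes $v(T + \lambda \alpha I) = v(T) + |\alpha|$ for a suitable choice of $\lambda$. The case $\alpha = 0$ is trivial (and is also covered by the remark about $v(T)v(S)=0$), so I would assume $\alpha \neq 0$ throughout.

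The core of the argument is to approximate the supremum defining $v(T)$ and then use a unimodular rotation to align the contribution of $\alpha I$ with that of $T$. Concretely, I would take a sequence $(x_n, x_n^*)$ with $x_n \in S_X$, $x_n^* \in J(x_n)$, and $|x_n^*(Tx_n)| \to v(T)$. Writing $x_n^*(Tx_n) = |x_n^*(Tx_n)| e^{i\theta_n}$, I want to choose a single $\lambda \in \mathbb{T}$ so that $\lambda \alpha$ has phase matching $e^{i\theta_n}$, making $|x_n^*(Tx_n) + \lambda \alpha \, x_n^*(x_n)| = |x_n^*(Tx_n)| + |\alpha|$. The obstacle here is that the phases $\theta_n$ need not converge or be constant, so a single $\lambda$ cannot in general match all of them simultaneously; the clean rotation trick works pointwise but not uniformly along the sequence.

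To circumvent this, I would pass to a convergent subsequence of the phases. Since $\mathbb{T}$ is compact, the sequence $e^{i\theta_n}$ has a subsequence converging to some $e^{i\theta_0} \in \mathbb{T}$; I then set $\lambda := \alpha^{-1}|\alpha| e^{i\theta_0}$ (equivalently choose $\lambda$ so that $\lambda\alpha = |\alpha| e^{i\theta_0}$). Along that subsequence, using $x_n^*((T + \lambda\alpha I)x_n) = x_n^*(Tx_n) + \lambda\alpha$, I would estimate
\[
v(T + \lambda\alpha I) \geq |x_n^*(Tx_n) + |\alpha| e^{i\theta_0}| \geq |x_n^*(Tx_n) + |\alpha| e^{i\theta_n}| - |\alpha|\,|e^{i\theta_0} - e^{i\theta_n}|,
\]
and the first term equals $|x_n^*(Tx_n)| + |\alpha|$ since the phases now align exactly. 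Taking $n \to \infty$ along the subsequence, the error term vanishes and the right-hand side tends to $v(T) + |\alpha|$, giving $v(T + \lambda\alpha I) \geq v(T) + |\alpha| = v(T) + v(\alpha I)$. Combined with the subadditivity of the seminorm $v$, this forces equality, so $T \parallel_v \alpha I$.

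The step I expect to be the main obstacle is precisely the phase-alignment issue: handling the fact that the optimal phase can vary along the approximating sequence, which is why the compactness-and-subsequence device is essential rather than a naive single rotation. A secondary point requiring care is whether the duality map $J(x_n)$ is used correctly — one must verify $x_n^*(x_n) = \|x_n\| = 1$ so that the $\alpha I$ term contributes exactly $\lambda\alpha$ with modulus $|\alpha|$, which is what makes the triangle-inequality lower bound tight. Everything else reduces to routine estimates with the triangle inequality and the continuity of $t \mapsto e^{it}$.
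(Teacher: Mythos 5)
Your proof is correct and follows essentially the same route as the paper: both approximate $v(T)$ by a sequence $(x_n,x_n^*)$, use compactness of $\mathbb{T}$ to extract a convergent subsequence of the phases of $x_n^*(Tx_n)$, and then choose $\lambda$ so that $\lambda\alpha$ aligns with the limiting phase, yielding $v(T+\lambda\alpha I)\geq v(T)+v(\alpha I)$ and hence equality by subadditivity. Your $\lambda=\alpha^{-1}|\alpha|e^{i\theta_0}$ is exactly the paper's $\lambda\overline{\alpha}/|\alpha|$; the only cosmetic difference is that the paper passes the limit through the aligned expression directly rather than via your explicit error term $|\alpha|\,|e^{i\theta_0}-e^{i\theta_n}|$ (and writing phases as unimodular scalars $\lambda_n\in\mathbb{T}$ rather than $e^{i\theta_n}$ covers the real case $\mathbb{T}=\{-1,1\}$ uniformly).
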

\begin{proof}
	Without loss of generality, we assume that $\alpha\neq0$.
	By the definition of $v(T)$, there exist sequences $\{x_n\}\subset S_X$ and $\{x^*_n\}\subset S_{X^*}$ with $x^*_n(x_n)=1$ for all $n\in\mathbb{Z}_{\geq1}$ such that
	$$\lim_{n\to\infty}\big|x^*_n\left(Tx_n\right)\big|=v(T).$$
	For every $n\in\mathbb{Z}_{\geq1}$, there exists $\lambda_n\in\mathbb{T}$ such that
	$$x^*_n\left(Tx_n\right)=\lambda_n\big|x^*_n\left(Tx_n\right)\big|.$$
	By passing to a subsequence, we may assume that $\{\lambda_n\}$ converges to some $\lambda\in\mathbb{T}$. Thus
	$$\lim_{n\to\infty}x^*_n\left(Tx_n\right)=\lambda v(T).$$
	Observe that $v(\alpha I)=|\alpha|$, $\lambda\overline{\alpha}/|\alpha|\in\mathbb{T}$ and
\begin{align*}
	&\big|x^*_n\left[\left(T+\lambda\overline{\alpha}/|\alpha|\cdot\alpha I\right)x_n\right]\big|\\
	=&\big|\lambda_n|x^*_n\left(Tx_n\right)|+\lambda|\alpha|\big|\to\big|\lambda v(T)+\lambda v(\alpha I)\big|=v(T)+v(\alpha I)
\end{align*}
	As a result,
	$$v\left(T+\lambda\overline{\alpha}/|\alpha|\cdot\alpha I\right)\geq v(T)+v(\alpha I).$$
	Consequently, $T \parallel_{v} \alpha I$.
\end{proof}

\begin{proof}[Proof of Theorem \ref{linear dependence}]
	Let $x\in S_X$, $x^*\in J(x)$ and $T=x^*(\cdot)x$. We can apply the previous proposition to obtain $T\parallel_{v} I$.
	However, it is evident that $T$ and $I$ are linearly independent.
\end{proof}

In the subsequent example, we shall demonstrate that the relations $\parallel_{v}$ and $\parallel$ are in general not comparable.
\begin{example}
	Let $X=\mathbb{R}^2$ equipped with the norm defined by $\|(x,y)\|=(x^4+y^4)^{1/4}$ for $x,y\in \mathbb{R}$.
	Consider
\begin{align*}
	&T:X\rightarrow X,\;(x,y)\mapsto(0,x),\\
	&S:X\rightarrow X,\;(x,y)\mapsto(y,x).
\end{align*}
    Clearly, 
    $$\|T\|=\|S\|=1,$$
    $$\|T+S\|=2,$$
    which implies that $T\parallel S$. We will show that $T\nparallel_{v} S$. Actually, for any $z^*\in S_{X^*}$, there exist $\alpha,\beta\in\mathbb{R}$ such that
    $$\alpha^{4/3}+\beta^{4/3}=1$$
    and
    $$z^*(x,y)=\alpha x+\beta y$$
    for all $(x,y)\in X$.
    Given that, in addition, $(x,y)\in S_X$ and $z^*(x,y)=1$, we arrive at the following equation:
    $$\alpha x+\beta y=1=(\alpha^{4/3}+\beta^{4/3})^{3/4}(x^4+y^4)^{1/4}.$$
    Pursuant to the equality conditions of the H\"{o}lder's inequality, we further deduce
    $$|\alpha|^{4/3} |y|^4=|\beta|^{4/3} |x|^4,$$
    and hence
    $$\alpha=x^3,\;\beta=y^3.$$
This leads to
\begin{align*}
	v(T)&=\sup\{|z^*(Tz)|:z\in S_X,z^*\in J(z)\}\\
	&=\sup\{|\beta x|:x^4+y^4=1,\alpha^{4/3}+\beta^{4/3}=1,\alpha x+\beta y=1\}\\
	&=\max\{xy^3:x^{4}+y^{4}=1\},\\
	v(S)&=\sup\{|z^*(Sz)|:z\in S_X,z^*\in J(z)\}\\
	&=\sup\{|\alpha y+\beta x|:x^4+y^4=1,\alpha^{4/3}+\beta^{4/3}=1,\alpha x+\beta y=1\}\\
	&=\max\{yx^3+xy^3:x^{4}+y^{4}=1\},\\
	v(T+S)&=\sup\{|z^*\left[(T+S)z\right]|:z\in S_X,z^*\in J(z)\}\\
	&=\sup\{|\alpha y+2\beta x|:x^4+y^4=1,\alpha^{4/3}+\beta^{4/3}=1,\alpha x+\beta y=1\}\\
	&=\max\{yx^3+2xy^3:x^{4}+y^{4}=1\},\\
	v(T-S)&=\sup\{|z^*\left[(T-S)z\right]|:z\in S_X,z^*\in J(z)\}\\
	&=\sup\{|\beta x-\alpha y|:x^4+y^4=1,\alpha^{4/3}+\beta^{4/3}=1,\alpha x+\beta y=1\}\\
	&=\max\{yx^3:x^{4}+y^{4}=1\}=\max\{xy^3:x^{4}+y^{4}=1\}.
\end{align*}
    Under the condition $x^4+y^4=1$, we have
\begin{align*}
	&(xy^3)^{1/4}=\left[3^{3/4}x\left(\frac{y}{3^{1/4}}\right) ^3\right]^{1/4}\leq\frac{y}{3^{1/16}}\left[\frac{x^4+3\left(\frac{y}{3^{1/4}}\right)^4}{4}\right]^{1/4}=\left(\frac{3^{3/4}}{4}\right)^{1/4},\\
	&(yx^3+xy^3)^{1/2}=\sqrt{2}\sqrt{xy}\left(\frac{x^2+y^2}{2}\right)^{1/2}\leq\sqrt{2}\left(\frac{x^4+y^4}{2}\right)^{1/4} \left(\frac{x^4+y^4}{2}\right)^{1/4}=1,
\end{align*}
    with equality in the first inequality if and only if $3^{1/4}x=y$ and equality in the second inequality if and only if $x=y$. Consequently,
    $$yx^3+2xy^3=(yx^3+xy^3)+xy^3<1+\frac{3^{3/4}}{4},$$
    since equality in both inequalities cannot be achieved simultaneously under the given condition.
    As a result
    $$v(T\pm S)<v(S)+v(T),$$
    leading to $T\nparallel_{v} S$.
    
    On the other hand, 
    $$\|T\pm I\|=\max\left\{[x^4+(x\pm y)^4]^{1/4}:x^4+y^4=1\right\}<2=\|T\|+\|I\|.$$
    In fact, under the condition $x^4+y^4=1$, we have
\begin{align*}
	&x^4+(x\pm y)^4=1+(x\pm y)^4-y^4\\
    =&\left\{ \begin{array}{cl}
		1+x(x+2y)[(x+y)^2+y^2]<16, & |x|<1,\\
		1, & |x|=1.\\
	\end{array}\right.
\end{align*}
    Thus $T\nparallel I$, despite $T\parallel_{v} I$ as established in Proposition \ref{PvI}.
\end{example}

One of our main results is the following characterization of numerical radius parallelism in $\mathbb{B}(X)$.

\begin{theorem}\label{v1}
	Let $T, S \in \mathbb{B}(X)$. Then the following conditions are equivalent:  
	
	$\mathrm{(i)}$ $T \parallel_{v} S$.  

	$\mathrm{(ii)}$ There exist sequences $\{x_n\}\subset S_X$ and $\{x^*_n\}\subset S_{X^*}$ with $x^*_n(x_n)=1$ for all $n\in\mathbb{Z}_{\geq1}$ such that
    $$\lim_{n\to\infty}\big|x^*_n\left(Tx_n\right)\big|\big|x^*_n\left(Sx_n\right)\big|=v(T)v(S).$$

	$\mathrm{(iii)}$ There exist sequences $\{x_n\}\subset S_X$ and $\{x^*_n\}\subset S_{X^*}$ with $x^*_n(x_n)=1$ for all $n\in\mathbb{Z}_{\geq1}$ such that
	$$\lim_{n\to\infty}\big|x^*_n\left(Tx_n\right)\big|=v(T)\quad \text{and} \quad\lim_{n\to\infty}\big|x^*_n\left(Sx_n\right)\big|=v(S).$$

\end{theorem}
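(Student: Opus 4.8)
The plan is to route all equivalences through condition (iii), establishing (i)$\Leftrightarrow$(iii) and (ii)$\Leftrightarrow$(iii), since (iii) records precisely the individual limits that make the other two conditions transparent. Throughout I would use only that $v$ is a semi-norm given as a supremum of the quantities $|x^*(Tx)|$ over admissible pairs $(x,x^*)$ with $x\in S_X$ and $x^*\in J(x)$; no Hilbert-space structure is available, so the argument must rest purely on this supremum description and on sub-additivity. First I would dispose of the degenerate case $v(T)v(S)=0$: if, say, $v(S)=0$, then $|x^*(Sx)|\le v(S)=0$ for every admissible pair, so any maximizing sequence for $v(T)$ satisfies (ii) and (iii), while (i) holds by the Remark. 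Hence one may assume $v(T),v(S)>0$.

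For (i)$\Rightarrow$(iii), fix $\lambda\in\mathbb{T}$ with $v(T+\lambda S)=v(T)+v(S)$ and choose $\{x_n\}\subset S_X$ and $\{x_n^*\}$ with $x_n^*(x_n)=1$ and $|x_n^*((T+\lambda S)x_n)|\to v(T+\lambda S)$. Writing $a_n=|x_n^*(Tx_n)|\le v(T)$ and $b_n=|x_n^*(Sx_n)|\le v(S)$, the triangle inequality yields the squeeze
$$v(T)+v(S)\ge a_n+b_n\ge |x_n^*((T+\lambda S)x_n)|\to v(T)+v(S),$$
so $a_n+b_n\to v(T)+v(S)$; since $v(T)-a_n\ge 0$ and $v(S)-b_n\ge 0$ have sum tending to $0$, each tends to $0$, which is exactly (iii). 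The implication (iii)$\Rightarrow$(ii) is immediate because products of convergent sequences converge to the product of the limits, and for (ii)$\Rightarrow$(iii) I would use the nonnegative decomposition
$$v(T)v(S)-a_nb_n=v(S)\,(v(T)-a_n)+a_n\,(v(S)-b_n),$$
whose left side tends to $0$ while both right-hand terms are nonnegative; dividing the first term by $v(S)>0$ gives $a_n\to v(T)$, and then $a_n\ge v(T)/2$ eventually forces $b_n\to v(S)$.

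The substantive step is (iii)$\Rightarrow$(i), which I expect to be the main obstacle, because it must reconstruct a single scalar $\lambda$ out of phase information that a priori varies with $n$. Given sequences with $a_n\to v(T)>0$ and $b_n\to v(S)>0$, for all large $n$ I would write $x_n^*(Tx_n)=a_n\mu_n$ and $x_n^*(Sx_n)=b_n\nu_n$ with $\mu_n,\nu_n\in\mathbb{T}$, then invoke compactness of $\mathbb{T}$ (a two-point set when $\mathbb{F}=\mathbb{R}$, the unit circle when $\mathbb{F}=\mathbb{C}$) to pass to a subsequence with $\mu_n\to\mu$ and $\nu_n\to\nu$. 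Setting $\lambda=\mu\bar\nu\in\mathbb{T}$ aligns the two terms, since $\lambda\nu_n\to\mu$, whence
$$x_n^*((T+\lambda S)x_n)=a_n\mu_n+\lambda b_n\nu_n\to (v(T)+v(S))\mu,$$
so that $v(T+\lambda S)\ge\lim|x_n^*((T+\lambda S)x_n)|=v(T)+v(S)$; sub-additivity supplies the reverse inequality and hence $T\parallel_{v} S$. The only delicacy is to treat $\mathbb{F}=\mathbb{R}$ and $\mathbb{F}=\mathbb{C}$ uniformly, which the choice $\lambda=\mu\bar\nu$ achieves in both cases.
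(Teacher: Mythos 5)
Your proposal is correct and takes essentially the same route as the paper's own proof: the decisive step (iii)$\Rightarrow$(i) is the identical phase-alignment argument (extract unimodular phases, use compactness of $\mathbb{T}$ to pass to convergent subsequences, set $\lambda=\mu\bar{\nu}$, and conclude via the semi-norm property), and your squeeze arguments for (i)$\Rightarrow$(iii) and (ii)$\Rightarrow$(iii) are the same triangle-inequality estimates the paper uses, since the paper's (i)$\Rightarrow$(ii) in fact establishes the individual limits of (iii) first, exactly as you do. The only cosmetic differences are your hub-and-spoke organization through (iii) versus the paper's cycle (i)$\Rightarrow$(ii)$\Rightarrow$(iii)$\Rightarrow$(i), and the particular nonnegative decomposition you use in (ii)$\Rightarrow$(iii) in place of the paper's bound $|x_n^*(Tx_n)|\geq |x_n^*(Tx_n)||x_n^*(Sx_n)|/v(S)$; neither changes the substance.
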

\begin{proof}
	$(\mathrm{i})\Rightarrow(\mathrm{ii})$: Suppose that there exists $\lambda\in\mathbb{T}$ such that
	$$v(T+\lambda S)=v(T)+v(S).$$
	By the definition of the numerical radius of $T+\lambda S$, there exist $\{x_n\}\subset S_X$ and $\{x^*_n\}\subset S_{X^*}$ with $x^*_n(x_n)=1$ for all $n\in\mathbb{Z}_{\geq1}$ such that
	$$\lim_{n\to\infty}\big|x^*_n\left[(T+\lambda S)x_n\right]\big|=v(T+\lambda S).$$
	Then the following sequence of inequalities hold:
\begin{align*}
	v(T)\geq\big|x^*_n\left(Tx_n\right)\big|
	\geq&\big|x^*_n\left(Tx_n\right)\big|+\big|x^*_n\left(Sx_n\right)\big|-v(S)\\
	\geq&\big|x^*_n\left[(T+\lambda S)x_n\right]\big|-v(S)\\
	\to &v(T+\lambda S)-v(S)=v(T),\\
	v(S)\geq\big|x^*_n\left(Sx_n\right)\big|
	\geq&\big|x^*_n\left(Tx_n\right)\big|+\big|x^*_n\left(Sx_n\right)\big|-v(T)\\
	\geq&\big|x^*_n\left[(T+\lambda S)x_n\right]\big|-v(T)\\
	\to &v(T+\lambda S)-v(T)=v(S).
\end{align*}
	Thus
\begin{align}\label{F1}
	\lim_{n\to\infty}\big|x^*_n\left(Tx_n\right)\big|=v(T)\quad \text{and} \quad\lim_{n\to\infty}\big|x^*_n\left(Sx_n\right)\big|=v(S)
\end{align}
    hold. Consequently,
    $$\lim_{n\to\infty}\big|x^*_n\left(Tx_n\right)\big|\big|x^*_n\left(Sx_n\right)\big|=v(T)v(S).$$
    
    $(\mathrm{ii})\Rightarrow(\mathrm{iii})$: If $v(T)=0$ or $v(S)=0$, the proof reduces to a straightforward case. Therefore, we proceed under the assumption that $v(T)v(S)\neq0$.
    Suppose that there exist sequences $\{x_n\}\subset S_X$ and $\{x^*_n\}\subset S_{X^*}$ with $x^*_n(x_n)=1$ for all $n\in\mathbb{Z}_{\geq1}$ such that
    $$\lim_{n\to\infty}\big|x^*_n\left(Tx_n\right)\big|\big|x^*_n\left(Sx_n\right)\big|=v(T)v(S).$$
    Then
    \begin{align*}
    	v(T)\geq\big|x^*_n\left(Tx_n\right)\big|\geq\frac{\big|x^*_n\left(Tx_n\right)\big|\big|x^*_n\left(Sx_n\right)\big|}{v(S)}\to\frac{v(T)v(S)}{v(S)}=v(T),\\
    	v(S)\geq\big|x^*_n\left(Sx_n\right)\big|\geq\frac{\big|x^*_n\left(Tx_n\right)\big|\big|x^*_n\left(Sx_n\right)\big|}{v(T)}\to\frac{v(T)v(S)}{v(T)}=v(S),
    \end{align*}
    implying (\ref{F1}) hold.
    
    $(\mathrm{iii})\Rightarrow(\mathrm{i})$: Without loss of generality, we assume that $v(T)v(S)\neq0$. Suppose that there exist $\{x_n\}\subset S_X$ and $\{x^*_n\}\subset S_{X^*}$ with $x^*_n(x_n)=1$ for all $n\in\mathbb{Z}_{\geq1}$ such that (\ref{F1}) hold. 
    By passing to a subsequence, we may assume that there exist $\mu_1,\mu_2\in\mathbb{T}$ such that
    $$x^*_n\left(Tx_n\right)\to v(T)\mu_1,\quad x^*_n\left(Sx_n\right)\to v(S)\mu_2.$$
    Let $\lambda=\mu_1\overline{\mu_2}$, then $\lambda\in\mathbb{T}$ and
\begin{align*}
	x^*_n\left[(T+\lambda S)x_n\right]=x^*_n\left(Tx_n\right)+\mu_1\overline{\mu_2}x^*_n\left(Sx_n\right)
	\to v(T)\mu_1+v(S)\mu_1.
\end{align*}
    It follows that
    $$v(T+\lambda S)\geq \lim_{n\to\infty}\big|x^*_n\left[(T+\lambda S)x_n\right]\big|=v(T)+v(S).$$
    This leads to $T \parallel_{v} S$.    
\end{proof}
\begin{remark}
	If $X$ is a Hilbert space equipped with the inner product $\left\langle \cdot,\cdot \right\rangle$, then $x\in S_X$ and $x^*\in J(x)$ imply that $x^*=\left\langle \cdot,x \right\rangle$. Consequently, by applying the current theorem, we are able to reestablish Theorem \ref{Mehrazin}.
\end{remark}

\begin{corollary}
	Let $X$ be a finite dimensional normed space and $T, S \in \mathbb{B}(X)$. Then the following conditions are equivalent:  
	
	$\mathrm{(i)}$ $T \parallel_{v} S$.  

    $\mathrm{(ii)}$ There exist $x\in S_X$ and $x^*\in J(x)$ such that
    $$\big|x^*\left(Tx\right)\big|\big|x^*\left(Sx\right)\big|=v(T)v(S).$$
	
	$\mathrm{(iii)}$ There exist $x\in S_X$ and $x^*\in J(x)$ such that
	$$\big|x^*\left(Tx\right)\big|=v(T)\quad \text{and} \quad\big|x^*\left(Sx\right)\big|=v(S).$$	
\end{corollary}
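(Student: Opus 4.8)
The plan is to obtain the corollary from Theorem \ref{v1} by exploiting the compactness of the unit spheres $S_X$ and $S_{X^*}$ available in finite dimensions. I would establish the cycle $(\mathrm{i})\Rightarrow(\mathrm{iii})\Rightarrow(\mathrm{ii})\Rightarrow(\mathrm{i})$, in which only $(\mathrm{i})\Rightarrow(\mathrm{iii})$ carries genuine content while the other two implications are essentially formal.

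For $(\mathrm{i})\Rightarrow(\mathrm{iii})$, Theorem \ref{v1} furnishes sequences $\{x_n\}\subset S_X$ and $\{x_n^*\}\subset S_{X^*}$ with $x_n^*(x_n)=1$ for all $n$ and $\lim_{n\to\infty}\big|x_n^*(Tx_n)\big|=v(T)$, $\lim_{n\to\infty}\big|x_n^*(Sx_n)\big|=v(S)$. Since $\dim X<\infty$, both $S_X$ and $S_{X^*}$ are norm compact, so after passing to a subsequence I may assume $x_n\to x\in S_X$ and $x_n^*\to x^*\in S_{X^*}$. Using that the evaluation map $(y^*,y)\mapsto y^*(y)$ is jointly continuous for norm convergence in both arguments and that $T,S$ are continuous, I pass to the limit to get $x^*(x)=\lim_{n\to\infty}x_n^*(x_n)=1$, so $x^*\in J(x)$, together with $\big|x^*(Tx)\big|=v(T)$ and $\big|x^*(Sx)\big|=v(S)$. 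This is exactly $(\mathrm{iii})$.

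For $(\mathrm{iii})\Rightarrow(\mathrm{ii})$ I simply multiply the two equalities. For $(\mathrm{ii})\Rightarrow(\mathrm{i})$ I observe that a single pair $(x,x^*)$ with $x\in S_X$ and $x^*\in J(x)$ furnishes the constant sequences $x_n=x$, $x_n^*=x^*$, which satisfy condition $(\mathrm{ii})$ of Theorem \ref{v1}; that theorem then yields $T\parallel_v S$ directly, with no case distinction required.

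The main obstacle is the limiting argument in $(\mathrm{i})\Rightarrow(\mathrm{iii})$: I must extract convergent subsequences of the primal sequence $\{x_n\}$ and the dual sequence $\{x_n^*\}$ simultaneously and verify that the duality relation $x_n^*(x_n)=1$ persists in the limit, so that the limit functional $x^*$ genuinely lies in $J(x)$. This is precisely the point at which finite-dimensionality is indispensable: in an infinite-dimensional space $S_{X^*}$ is in general only weak-$*$ compact, and the norm convergence of $\{x_n^*\}$ needed to pass $\big|x_n^*(Tx_n)\big|$ to the limit jointly with $x_n\to x$ may fail. This is exactly why the sequential formulation of Theorem \ref{v1} cannot, in general, be collapsed to a statement about a single point.
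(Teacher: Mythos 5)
Your proposal is correct and follows essentially the same route as the paper: the same cycle $(\mathrm{i})\Rightarrow(\mathrm{iii})\Rightarrow(\mathrm{ii})\Rightarrow(\mathrm{i})$, with $(\mathrm{i})\Rightarrow(\mathrm{iii})$ obtained from Theorem \ref{v1} by extracting norm-convergent subsequences of $\{x_n\}$ and $\{x_n^*\}$ via finite-dimensional compactness and passing to the limit (the paper just spells out the joint continuity of evaluation you invoke as explicit triangle-inequality estimates), and the other two implications handled formally through Theorem \ref{v1}. No gaps.
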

\begin{proof}
	$(\mathrm{iii})\Rightarrow(\mathrm{ii})$: Clearly.
    
    $(\mathrm{ii})\Rightarrow(\mathrm{i})$: It follows immediately from Theorem \ref{v1}.
	
	$(\mathrm{i})\Rightarrow(\mathrm{iii})$: Suppose that $T \parallel_{v} S$. By Theorem \ref{v1}, there exist sequences $\{x_n\}\subset S_X$ and $\{x^*_n\}\subset S_{X^*}$ with $x^*_n(x_n)=1$ for all $n\in\mathbb{Z}_{\geq1}$ such that
	$$\lim_{n\to\infty}\big|x^*_n\left(Tx_n\right)\big|=v(T)\quad \text{and} \quad\lim_{n\to\infty}\big|x^*_n\left(Sx_n\right)\big|=v(S).$$
	
	Since $X$ is finite dimensional, by passing to a subsequence, we can assume that $\{x_n\}$ and $\{x^*_n\}$ converge to some $x\in S_X$ and some $x^*\in S_{X^*}$, respectively. Then
\begin{align*}
	|x^*(x)-1|&=|x^*(x)-x^*_n(x_n)|\\
	&\leq|x^*(x-x_n)|+|(x^*-x^*_n)(x_n)|\\
	&\leq\|x-x_n\|+\|x^*-x^*_n\|\to0,\\
	\big||x^*(Tx)|-v(T)\big|&\leq\big||x^*(Tx)|-|x^*_n(Tx_n)|\big|+\big||x^*_n(Tx_n)|-v(T)\big|\\
	&\leq|x^*(Tx-Tx_n)|+|(x^*-x^*_n)(Tx_n)|+\big||x^*_n(Tx_n)|-v(T)\big|\\
	&\leq\|T\|\|x-x_n\|+\|T\|\|x^*-x^*_n\|+\big||x^*_n(Tx_n)|-v(T)\big|\to0,\\
	\big||x^*(Sx)|-v(S)\big|&\leq\big||x^*(Sx)|-|x^*_n(Sx_n)|\big|+\big||x^*_n(Sx_n)|-v(S)\big|\\
	&\leq|x^*(Sx-Sx_n)|+|(x^*-x^*_n)(Sx_n)|+\big||x^*_n(Sx_n)|-v(S)\big|\\
	&\leq\|S\|\|x-x_n\|+\|S\|\|x^*-x^*_n\|+\big||x^*_n(Sx_n)|-v(S)\big|\to0.
\end{align*}
    Consequently, $x^*\in J(x)$, $\big|x^*\left(Tx\right)\big|=v(T)$ and $\big|x^*\left(Sx\right)\big|=v(S)$ hold.
\end{proof}

Concisely stated, based on the definition of numerical radius parallelism or Theorem \ref{v1}, fundamental properties are derived immediately and listed below without proof.

\begin{Proposition}
	$ $
	$\mathrm{(i)}$ Numerical radius parallelism possesses symmetry, that is, if $T,S\in \mathbb{B}(X)$ satisfy $T \parallel_{v} S$, then $S \parallel_{v} T$.
	
	$\mathrm{(ii)}$ Numerical radius parallelism possesses homogeneity, which means that if $T,S\in \mathbb{B}(X)$ satisfy $T \parallel_{v} S$, then $\alpha T \parallel_{v}\beta S$ for all $\alpha,\beta\in\mathbb{F}$.
\end{Proposition}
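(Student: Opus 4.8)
The plan is to deduce both properties directly, since each is either a one-line consequence of the semi-norm structure of $v$ or of the characterization in Theorem \ref{v1}; neither part should require a genuinely new argument, which is consistent with the paper listing them without proof.

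For the symmetry in (i), I would argue from the definition. Suppose $T\parallel_{v} S$, so that $v(T+\lambda S)=v(T)+v(S)$ for some $\lambda\in\mathbb{T}$. Since $|\lambda|=1$ and $v$ is a semi-norm, I would factor $T+\lambda S=\lambda(S+\overline{\lambda}T)$ and use homogeneity to get $v(S+\overline{\lambda}T)=|\lambda|^{-1}v(T+\lambda S)=v(T+\lambda S)$. Taking $\mu=\overline{\lambda}\in\mathbb{T}$ then yields $v(S+\mu T)=v(T)+v(S)=v(S)+v(T)$, which is exactly $S\parallel_{v} T$. Alternatively, one may simply invoke Theorem \ref{v1}(ii), whose defining limit $\lim_{n}\big|x^*_n(Tx_n)\big|\big|x^*_n(Sx_n)\big|=v(T)v(S)$ is manifestly symmetric in $T$ and $S$, so that symmetry is immediate.

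For the homogeneity in (ii), I would first dispose of the degenerate cases $\alpha=0$ or $\beta=0$: then $v(\alpha T)v(\beta S)=0$, and the Remark following the definition of $\parallel_{v}$ gives $\alpha T\parallel_{v}\beta S$ at once. For $\alpha,\beta\neq0$, I would pass to the characterization in Theorem \ref{v1}(iii). From $T\parallel_{v} S$ we obtain sequences $\{x_n\}\subset S_X$ and $\{x^*_n\}\subset S_{X^*}$ with $x^*_n(x_n)=1$, $\big|x^*_n(Tx_n)\big|\to v(T)$ and $\big|x^*_n(Sx_n)\big|\to v(S)$. Since $v$ is a semi-norm, $v(\alpha T)=|\alpha|v(T)$ and $v(\beta S)=|\beta|v(S)$, and the same sequences satisfy $\big|x^*_n((\alpha T)x_n)\big|=|\alpha|\big|x^*_n(Tx_n)\big|\to|\alpha|v(T)=v(\alpha T)$ and likewise $\big|x^*_n((\beta S)x_n)\big|\to v(\beta S)$. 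Thus condition (iii) holds for the pair $(\alpha T,\beta S)$, and Theorem \ref{v1} gives $\alpha T\parallel_{v}\beta S$.

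The argument carries no substantive obstacle; the only points requiring care are the bookkeeping of the unimodular phase factors in (i) and the correct handling of the degenerate cases $\alpha\beta=0$ in (ii), where the defining equality cannot be produced by a phase choice and one must instead appeal to the Remark. Everything else reduces to the semi-norm homogeneity $v(\gamma A)=|\gamma|v(A)$ together with the already-established Theorem \ref{v1}.
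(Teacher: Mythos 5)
Your proposal is correct and follows exactly the route the paper itself indicates: the paper lists this proposition without proof, remarking that it follows immediately from the definition of $\parallel_v$ (which your phase-factoring argument $T+\lambda S=\lambda(S+\overline{\lambda}T)$ makes precise for symmetry) or from Theorem \ref{v1} (which you invoke for homogeneity, with the degenerate case $\alpha\beta=0$ correctly handled via the Remark). Nothing is missing; the bookkeeping of unimodular factors and the semi-norm homogeneity $v(\gamma A)=|\gamma|v(A)$ are all that is needed.
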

 
A natural question then arises regarding its transitivity, i.e., whether the following implication is true: 
    $$T,S,R\in\mathbb{B}(X)\setminus\{0\},\;T\parallel_{v}S,\;S\parallel_{v}R\;\Rightarrow\;T\parallel_{v}R.$$
The answer is negative. Mehrazin $\mathit{et\;al.}$ \cite[Example]{Mehrazin} presented an example to show that the numerical radius parallelism of Hilbert space operators is in general not transitive. We will establish a more thorough result.
\begin{theorem}\label{Tpt}
	Let $X$ be a normed space with dimension not less than 2. Then the numerical radius parallelism in $\mathbb{B}(X)$ fails to be transitive.
\end{theorem}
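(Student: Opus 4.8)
The plan is to obtain the failure of transitivity from a single non-parallel pair, using the identity operator as a bridge. By Proposition \ref{PvI} we have $T\parallel_v I$ for every $T\in\mathbb{B}(X)$, and by symmetry $I\parallel_v R$ for every $R\in\mathbb{B}(X)$. Consequently, it suffices to exhibit nonzero operators $T,R\in\mathbb{B}(X)$ with $T\not\parallel_v R$: setting $S=I$ then gives $T\parallel_v S$, $S\parallel_v R$ and $T\not\parallel_v R$, with $T,S,R$ all nonzero, which is exactly a failure of transitivity.

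To construct such a pair I would fix unit vectors $e_1,e_2\in S_X$ that are \emph{not} norm parallel, choose norming functionals $e_1^*\in J(e_1)$ and $e_2^*\in J(e_2)$, and define $Tx=e_1^*(x)e_1$ and $Rx=e_2^*(x)e_2$. Since $|x^*(Tx)|=|e_1^*(x)|\,|x^*(e_1)|$ is a product of two quantities in $[0,1]$ attaining $1$ at $(e_1,e_1^*)$, we get $v(T)=1$, and likewise $v(R)=1$; both operators are nonzero. Suppose, for contradiction, that $T\parallel_v R$. By Theorem \ref{v1}(iii) there exist $\{x_n\}\subset S_X$ and $\{x_n^*\}\subset S_{X^*}$ with $x_n^*(x_n)=1$, $|x_n^*(Tx_n)|\to v(T)=1$ and $|x_n^*(Rx_n)|\to v(R)=1$. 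As each of the two factors above lies in $[0,1]$ and their product tends to $1$, both factors tend to $1$; in particular $|x_n^*(e_1)|\to1$ and, similarly, $|x_n^*(e_2)|\to1$. Passing to a subsequence we may assume $x_n^*(e_1)\to\alpha$ and $x_n^*(e_2)\to\beta$ with $|\alpha|=|\beta|=1$. Taking $\eta=\overline{\beta}\alpha\in\mathbb{T}$ (so that $\eta\beta=\alpha$) and using $\|x_n^*\|=1$, we obtain
$$\|e_1+\eta e_2\|\geq|x_n^*(e_1)+\eta\,x_n^*(e_2)|\to|\alpha+\eta\beta|=2.$$
Hence $\|e_1+\eta e_2\|=2=\|e_1\|+\|\eta e_2\|$, i.e.\ $e_1\parallel e_2$, contradicting the choice of $e_1,e_2$. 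Therefore $T\not\parallel_v R$.

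The one step that genuinely requires care — and which I expect to be the main obstacle — is guaranteeing that non-norm-parallel unit vectors $e_1,e_2$ exist in an \emph{arbitrary} normed space of dimension at least $2$, including non-smooth and infinite-dimensional ones. To handle this I would first note that the relation $e_1\parallel e_2$ depends only on the norm restricted to the span of $e_1,e_2$, so it is enough to find a non-parallel pair inside any fixed two-dimensional subspace $Y\subseteq X$; such vectors remain non-parallel in $X$. Since $Y$ is finite dimensional, its norm is a convex function and hence differentiable on a dense subset of $S_Y$, so $Y$ possesses a smooth point $e_1$, that is, one with a unique norming functional $\phi_0\in S_{Y^*}$. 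If every pair in $S_Y$ were norm parallel, then for each $y\in S_Y$ the equality case of the triangle inequality in $\|e_1+\lambda y\|=2$ would furnish a norming functional of $e_1$ that also norms $y$; by uniqueness this functional is $\phi_0$, forcing $|\phi_0(y)|=\|y\|$ for all $y\in Y$, which is impossible because $\phi_0\neq0$ has nontrivial kernel. Thus a non-parallel pair exists in $Y$. Extending $e_1^*,e_2^*$ to $X^*$ by Hahn--Banach completes the construction of the previous paragraph, and combining it with the bridging reduction proves that $\parallel_v$ fails to be transitive.
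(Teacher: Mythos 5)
Your proof is correct and follows essentially the same route as the paper's: bridging through the identity via Proposition \ref{PvI} and its symmetry, building rank-one operators $e_1^*(\cdot)e_1$ and $e_2^*(\cdot)e_2$ from a non-norm-parallel pair of unit vectors, and producing that pair from a smooth point of a two-dimensional subspace (the same Rademacher-type existence the paper invokes). The only cosmetic difference is that you reprove the needed special case of Proposition \ref{Pnv} inline via Theorem \ref{v1}, rather than citing that proposition as the paper does.
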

To this end, we need the following proposition, which provides a sufficient condition for norm parallelism in terms of numerical radius parallelism.
\begin{Proposition}\label{Pnv}
	Suppose that $x,y \in X$, $x^*\in J(x)$ and $y^* \in J(y)$ such that $x^*(\cdot)x \parallel_{v} y^*(\cdot)y$ holds. Then $x \parallel y$.
\end{Proposition}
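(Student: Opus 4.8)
The plan is to funnel the hypothesis down to the behaviour of a single sequence of norming functionals. First I would dispose of the trivial case: if $x=0$ or $y=0$, then $\|x+\lambda y\|=\|x\|+\|y\|$ for every $\lambda\in\mathbb{T}$, so $x\parallel y$ holds automatically; hence assume $x,y\neq0$. The next step is to identify the numerical radii of the two rank-one operators. Setting $T=x^*(\cdot)x$ and $S=y^*(\cdot)y$, for any $z\in S_X$ and $z^*\in J(z)$ one has $z^*(Tz)=x^*(z)\,z^*(x)$, so that $|z^*(Tz)|=|x^*(z)|\,|z^*(x)|\le\|x\|$ since $\|x^*\|=1$ and $\|z^*\|=1$. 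Taking $z=x/\|x\|$ together with $z^*=x^*\in J(z)$ attains this bound, whence $v(T)=\|x\|$, and symmetrically $v(S)=\|y\|$.

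Now I would invoke the equivalence $(\mathrm{i})\Leftrightarrow(\mathrm{iii})$ of Theorem \ref{v1}. Since $T\parallel_{v}S$, there exist \emph{common} sequences $\{w_n\}\subset S_X$ and $\{w^*_n\}\subset S_{X^*}$ with $w^*_n(w_n)=1$ such that
\[
\lim_{n\to\infty}|x^*(w_n)|\,|w^*_n(x)|=v(T)=\|x\|,\qquad \lim_{n\to\infty}|y^*(w_n)|\,|w^*_n(y)|=v(S)=\|y\|.
\]
The key squeeze is that each product tends to its maximal possible value, forcing each factor to be asymptotically maximal: from $|x^*(w_n)|\le1$ and $|w^*_n(x)|\le\|x\|$ together with the first limit I get $|w^*_n(x)|\ge|x^*(w_n)|\,|w^*_n(x)|\to\|x\|$, hence $|w^*_n(x)|\to\|x\|$, and likewise $|w^*_n(y)|\to\|y\|$. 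The crucial payoff is that the single functional sequence $\{w^*_n\}$ simultaneously almost norms both $x$ and $y$.

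Finally I would extract the phases. Passing to a subsequence, write $w^*_n(x)\to\|x\|\sigma$ and $w^*_n(y)\to\|y\|\tau$ with $\sigma,\tau\in\mathbb{T}$, and set $\lambda=\sigma\overline{\tau}\in\mathbb{T}$. Then
\[
w^*_n(x+\lambda y)=w^*_n(x)+\lambda\,w^*_n(y)\to \|x\|\sigma+\|y\|\sigma=(\|x\|+\|y\|)\sigma,
\]
so $|w^*_n(x+\lambda y)|\to\|x\|+\|y\|$. Since $w^*_n\in S_{X^*}$ gives $|w^*_n(x+\lambda y)|\le\|x+\lambda y\|$ for every $n$, passing to the limit yields $\|x+\lambda y\|\ge\|x\|+\|y\|$; the reverse inequality is the triangle inequality, so $\|x+\lambda y\|=\|x\|+\|y\|$, i.e. $x\parallel y$.

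The argument is short, and I expect the genuinely load-bearing points to be two. The first is the computation $v(x^*(\cdot)x)=\|x\|$, which hinges on isolating the norming pair $z=x/\|x\|$, $z^*=x^*$. The second, and the real heart of the proof, is exploiting that Theorem \ref{v1}(iii) supplies the \emph{same} pair of sequences $\{w_n\},\{w^*_n\}$ for both operators: it is precisely this sharing that lets one functional sequence nearly norm $x$ and $y$ at once, after which the matching unimodular phase $\lambda=\sigma\overline{\tau}$ aligns the two limits and produces norm parallelism.
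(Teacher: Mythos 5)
Your proof is correct and follows essentially the same route as the paper's: compute $v(x^*(\cdot)x)=\|x\|$ and $v(y^*(\cdot)y)=\|y\|$ via the norming pair $x/\|x\|$, $x^*$; apply Theorem \ref{v1}(iii) to get one common sequence of norming functionals; squeeze each factor to obtain $|w^*_n(x)|\to\|x\|$ and $|w^*_n(y)|\to\|y\|$; then extract phases $\sigma,\tau\in\mathbb{T}$ and set $\lambda=\sigma\overline{\tau}$ to conclude $\|x+\lambda y\|=\|x\|+\|y\|$. The paper's proof (with $\mu_1,\mu_2$ in place of $\sigma,\tau$) is the same argument step for step, so there is nothing to add.
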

\begin{proof}
	Without losing generality, we assume that $\|x\|\|y\|\neq0$.
	Let $T=x^*(\cdot)x$ and $S=y^*(\cdot)y$. Then
\begin{align*}
	v(T)=&\sup\{|z^*\left[x^*(z)x\right]|:z\in S_X,z^*\in S_{X^*}, z^*(z)=1\}\leq\|x\|,\\
	v(T)\geq&x^*\left[x^*\left(\frac{x}{\|x\|}\right) x\right]=\|x\|.
\end{align*}
    It follows that $v(T)=\|x\|$. Similarly, $v(S)=\|y\|$.
	By Theorem \ref{v1}, there exist sequences $\{z_n\}\subset S_X$ and $\{z^*_n\}\subset S_{X^*}$ with $z^*_n(z_n)=1$ for all $n\in\mathbb{Z}_{\geq1}$ such that
	$$\lim_{n\to\infty}\big|z^*_n\left[x^*(z_n)x\right]\big|=\|x\|\quad \text{and} \quad\lim_{n\to\infty}\big|z^*_n\left[y^*(z_n)y\right]\big|=\|y\|.$$
	Observe that
    $$\|x\|=\lim_{n\to\infty}\big|z^*_n\left[x^*(z_n)x\right]\big|\leq\liminf_{n\to\infty}\big|z^*_n(x)\big|\leq\limsup_{n\to\infty}\big|z^*_n(x)\big|\leq\|x\|.$$
    Thus $\big|z^*_n(x)\big|\to\|x\|$. Similarly, $\big|z^*_n(y)\big|\to\|y\|$. By passing to a subsequence, we may assume that there exist $\mu_1,\mu_2\in\mathbb{T}$ such that
    $$z^*_n(x)\to \|x\|\mu_1,\quad z^*_n(y)\to \|y\|\mu_2.$$
    Let $\lambda=\mu_1\overline{\mu_2}$, then $\lambda\in\mathbb{T}$ and
    $$\|x\|+\|y\|\geq\|x+\lambda y\|\geq|z^*_n(x)+\lambda z^*_n(y)|\to\big|\|x\|\mu_1+\|y\|\mu_1\overline{\mu_2}\mu_2\big|=\|x\|+\|y\|.$$
    As a result, $\|x+\lambda y\|=\|x\|+\|y\|$, leading to $x \parallel y$.
\end{proof}

Recall that a point $x\in X$ is said to be a smooth point if its duality mapping $J(x)$ is a singleton. It is worth noting that, as a particular instance of Rademacher's theorem \cite{Rademache}, almost every point (in the sense of Lebesgue measure) in a finite dimensional normed space is a smooth point. For further reference, see \cite[Proposition 2.3]{Blanco}.
\begin{proof}[Proof of Theorem \ref{Tpt}]
	Let $Y$ be a 2 dimensional subspace of $X$, $x\in S_Y$ a smooth point of $Y$, $f\in S_{Y^*}$ with $f(x)=1$ and $y\in S_Y$ satisfies $f(y)=0$. 
	We claim that $x\nparallel y$. If not, then there exists $\lambda\in\mathbb{T}$ such that
	$$\|x+\lambda y\|=\|x\|+\|y\|=2.$$
	It follows that there exists $g\in S_{Y^*}$ such that $$2=g\left(x+\lambda y\right)=g(x)+\lambda g(y).$$
	This leads to $g(x)=1$ and $\left|g(y)\right|=1$. Since $x$ is a smooth point of $Y$, $f=g$. But then $g(y)=0$, a contradiction. The Hahn-Banach theorem yields $x^*,y^* \in S_{X^*}$ such that $x^*(x)=y^*(y)=1$. Applying Proposition \ref{Pnv}, we arrive at $x^*(\cdot)x$ is not numerical radius parallel to $y^*(\cdot)y$. Since $x^*(\cdot)x\parallel_{v} I$ and $I\parallel_{v}y^*(\cdot)y$
	hold as established in Proposition \ref{PvI}, we conclude that the numerical radius parallelism in $\mathbb{B}(X)$ is not transitive.
\end{proof}

It is noteworthy that the converse of Proposition \ref{Pnv} holds true provided that $X$ is a complex Hilbert space, as demonstrated in \cite[Corollary 2.6]{Mehrazin}. In this context, we proceed to present some weaker conditions that suffice for the converse of Proposition \ref{Pnv} to be valid. 

The subsequent theorem focus on \emph{rotund points}, defined as elements of $S_X$ that do not lie within any line segment contained in $S_X$. It is worth recalling that $X$ is strictly convex (or smooth, resp.) precisely when $S_X$ consists entirely of rotund points (or smooth points, resp.).
\begin{theorem}
	Let $X$ be a normed space with dimension not less than 2. Then the implication $\mathrm{(i)}\,\Rightarrow\,\mathrm{(ii)}$ holds.
	
	$\mathrm{(i)}$ Every point $x\in S_X$ is a rotund point or a smooth point in any 2-dimensional subspace including $x$.
	
	$\mathrm{(ii)}$ If $x,y\in X$ satisfy $x\parallel y$. Then for any $x^*\in J(x)$ and $y^*\in J(y)$, the condition $x^*(\cdot)x \parallel_{v} y^*(\cdot)y$ holds.
	
	In particular, if $X$ is strictly convex or smooth, then $\mathrm{(ii)}$ hold.
\end{theorem}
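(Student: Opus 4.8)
The plan is to collapse everything to a single pair of witnessing vectors via Theorem \ref{v1}, and to extract smoothness of the relevant unit vectors from hypothesis (i). Write $T=x^*(\cdot)x$ and $S=y^*(\cdot)y$; the computation in Proposition \ref{Pnv} already gives $v(T)=\|x\|$ and $v(S)=\|y\|$. The cases where $v(T)v(S)=0$ (handled by the Remark) or where $x,y$ are linearly dependent, say $y=cx$ (take the constant sequence $z=x/\|x\|$ together with any $z^*\in J(x)$: then $Tz=x$ and $Sz=|c|x$, so $|z^*(Tz)|=\|x\|=v(T)$ and $|z^*(Sz)|=|c|\,\|x\|=v(S)$, verifying Theorem \ref{v1}(iii)) are immediate. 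Otherwise, using homogeneity of $\parallel$ and of $\parallel_v$ together with $J(x/\|x\|)=J(x)$, I may assume $\|x\|=\|y\|=1$ and $x,y$ linearly independent, with $\|x+\lambda y\|=2$ for some $\lambda\in\mathbb{T}$.

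First I would produce a norming functional: by Hahn--Banach choose $g\in S_{X^*}$ with $g(x+\lambda y)=2$. Since $|g(x)|\le1$ and $|g(y)|\le1$, the relation $g(x)+\lambda g(y)=2$ forces $g(x)=1$ and $g(y)=\bar\lambda$. Next set $Y=\mathrm{span}\{x,y\}$. The function $t\mapsto\|(1-t)x+t\lambda y\|$ is convex on $[0,1]$, equals $1$ at $t=0,\tfrac12,1$, and a convex function attaining its maximum at an interior point is constant; hence it is identically $1$ and the nondegenerate segment $[x,\lambda y]$ lies in $S_Y$. Since $x$ and $\lambda y$ both lie on this segment, neither is a rotund point of $Y$, so by (i) both $x$ and $\lambda y$ — hence also $y$ (smoothness is unchanged under unimodular scaling) — are smooth points of $Y$.

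The payoff of smoothness is that it pins down arbitrary supporting functionals after restriction to $Y$ (write $J_Y$ for the duality map computed inside $Y$). Indeed $g|_Y\in J_Y(x)$, and since $x$ is smooth in $Y$, $J_Y(x)$ is a singleton; as every $x^*\in J(x)$ restricts to an element of $J_Y(x)$, I get $x^*|_Y=g|_Y$, so $x^*(y)=g(y)=\bar\lambda$. Symmetrically $\lambda g|_Y\in J_Y(y)$ and smoothness of $y$ give $y^*|_Y=\lambda g|_Y$, whence $y^*(x)=\lambda g(x)=\lambda$, for every $y^*\in J(y)$. Now put $z=\tfrac12(x+\lambda y)$ and $z^*=g$; then $\|z\|=1$ and $g(z)=\tfrac12(1+\lambda\bar\lambda)=1$, so $z\in S_X$ and $z^*\in J(z)$. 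Using $x^*(y)=\bar\lambda$ and $y^*(x)=\lambda$ I compute $z^*(Tz)=x^*(z)\,g(x)=\tfrac12(1+\lambda\bar\lambda)=1$ and $z^*(Sz)=y^*(z)\,g(y)=\lambda\cdot\bar\lambda=1$. Thus $|z^*(Tz)|=v(T)$ and $|z^*(Sz)|=v(S)$, so the constant sequences $x_n=z$, $x_n^*=g$ verify condition (iii) of Theorem \ref{v1}, yielding $T\parallel_v S$.

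The step I expect to be most delicate is the passage from the parallelism equality to smoothness: one must read the definition of rotund point correctly (an endpoint of a segment contained in $S_X$ does lie on that segment, hence is not rotund), and then argue that a support functional of $x$ on the whole space is, once restricted to $Y$, forced to equal $g|_Y$ — this is exactly what upgrades the single equality $g(y)=\bar\lambda$ to the statement $x^*(y)=\bar\lambda$ for \emph{every} $x^*\in J(x)$, as condition (ii) demands. Finally, for the ``in particular'' clause I would note that strict convexity makes every point of $S_X$ rotund, while smoothness of $X$ makes every point smooth in every $2$-dimensional subspace (a norming functional on $Y$ extends by Hahn--Banach to one on $X$, and uniqueness in $X$ forces uniqueness in $Y$); in either case (i) holds, so (ii) follows.
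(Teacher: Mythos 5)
Your proof is correct and follows essentially the same route as the paper: normalize to unit vectors, dispose of the degenerate and linearly dependent cases, use the parallelism equality together with hypothesis $\mathrm{(i)}$ to rule out rotundity and obtain smoothness in $\mathrm{span}\{x,y\}$, pin down the restrictions of $x^*$ (and $y^*$) via the Hahn--Banach functional, and then exhibit an explicit witnessing pair for Theorem \ref{v1}(iii). The only substantive difference is the choice of witness: you evaluate at the midpoint $\tfrac{1}{2}(x+\lambda y)$, which forces you to prove smoothness of both $x$ and $y$ in $Y$ (via your segment argument), whereas the paper evaluates at the pair $(\lambda y, f)$ with $f\in J(x+\lambda y)$, for which $y^*(\lambda y)=\lambda$ holds automatically and only smoothness of $x$ is needed --- a small economy that the paper exploits in the Remark following the theorem.
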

\begin{proof}
	Suppose that $\mathrm{(i)}$ holds and that $x,y\in X$ satisfy $x\parallel y$. By the homogeneity of $\parallel$ and $\parallel_v$, we may assume that $x,y\in S_X$. Then there exists $\lambda\in\mathbb{T}$ such that $\|x+\lambda y\|=2$.
	
	Case 1: If $x$ and $y$ are linearly independent. Then $x$ can not be a rotund point of $\mathrm{\mathop{span}}\{x,y\}$. Otherwise we have $x=\lambda y$, a contradiction. Thus $x$ is a smooth point of $\mathrm{\mathop{span}}\{x,y\}$. Let $f\in J(x+\lambda y)$. Then
	$$2=f(x+\lambda y)=f(x)+f(\lambda y)\leq |f(x)|+|f(\lambda y)|\leq 2.$$
	This leads to
	$f(x)=f(\lambda y)=1$. Since $x$ is a smooth point of $\mathrm{\mathop{span}}\{x,y\}$, we arrive at $x^*=f$ on $\mathrm{\mathop{span}}\{x,y\}$. Hence
\begin{align*}
	v\big(x^*(\cdot)x+y^*(\cdot)y\big)\geq&\big|x^*(\lambda y)f(x)+y^*(\lambda y)f(y)\big|\\
	=&\big|f(\lambda y)f(x)+y^*(y)f(\lambda y)\big|\\
	=&2=v\big(x^*(\cdot)x\big)+v\big(y^*(\cdot)y\big),
\end{align*}
    which entails that $x^*(\cdot)x \parallel_{v} y^*(\cdot)y$.
	
	Case 2: If $x$ and $y$ are linearly dependent, then there exists $\mu\in\mathbb{T}$ such that $\mu x=y$. It follows that
\begin{align*}
	&v\big(x^*(\cdot)x+y^*(\cdot)y\big)\geq\big|x^*(x)x^*(x)+y^*(x)x^*(y)\big|\\
	=&\big|1+y^*(x)x^*(\mu x)\big|=\big|1+y^*(\mu x)x^*(x)\big|\\
	=&\big|1+y^*(y)x^*(x)\big|=2=v\big(x^*(\cdot)x\big)+v\big(y^*(\cdot)y\big),
\end{align*}
    which implies that $x^*(\cdot)x \parallel_{v} y^*(\cdot)y$.  
\end{proof}
\begin{remark}
	The implication $\mathrm{(ii)}\,\Rightarrow\,\mathrm{(i)}$ is invalid. To illustrate this, consider the space $X=\mathbb{R}^2$ equipped with the norm of element $(x_1,x_2)$ defined by
	$$\|(x_1,x_2)\|:=\left\{ \begin{array}{cl}
		x_1^2+x_2^2 & x_1x_2\geq0,\\
		\max\{|x_1|,|x_2|\} & x_1x_2\leq0,\\
	\end{array}\right.$$
    Then $y=(-1,1)\in S_X$ is neither a rotund point nor a smooth point. Observe that every point in $S_X$ that is distinct from $\pm y$ is a smooth point. By analogy with the proof of the previous theorem, it can be shown that $\mathrm{(ii)}$ holds $($note that in Case 1 of the proof, the sole requirement is that $x$ be a smooth point$)$.
    
    Nevertheless, we present an example to show that $\mathrm{(ii)}$ in the preceding theorem may fail in general.
\end{remark}
\begin{example}
	Let $X$ be the $\mathbb{F}$-vector space $\mathbb{F}^2$, where $\mathbb{F}=\mathbb{R}$ or $\mathbb{C}$, equipped with the norm of element $(x_1,x_2)$ defined by $\|(x_1,x_2)\|=|x_1|+|x_2|$. Let $x=(1,0)$ and $y=(0,1)$. Then $x\parallel y$ since
	$$\|(1,0)+(0,1)\|=2=\|(1,0)\|+\|(0,1)\|.$$
	Identify $X^*$ with $\mathbb{F}^2$ equipped with the norm of element $(x_1,x_2)$ defined by $\|(x_1,x_2)\|=\max\{|x_1|,|x_2|\}$. Let $x^*=(1,0)$ and $y^*=(0,1)$. Then $x^*\in J(x)$ and $y^*\in J(y)$. We claim that
	$x^*(\cdot)x \nparallel_{v} y^*(\cdot)y$. If not, then there exist $z=(z_1,z_2)\in S_X$, $z^*=(a,b)\in J(z)$ and $\lambda\in\mathbb{T}$ such that
\begin{align*}
	2&\geq|az_1|+|bz_2|=|az_1+\lambda bz_2|=|x^*(z)z^*(x)+\lambda y^*(z)y^*(x)|\\
	&=v\big(x^*(\cdot)x+\lambda y^*(\cdot)y\big)=v\big(x^*(\cdot)x\big)+v\big(y^*(\cdot)y\big)=2,
\end{align*}
    leading to $|az_1|=|bz_2|=1$. Thus $|z_1|\geq|az_1|=1$ and $|z_2|\geq|bz_2|=1$, contradicts to $|z_1|+|z_2|=1$.
\end{example}

\section{Numerical radius Birkhoff orthogonality}\label{s3}
Mal $\mathit{et\;al.}$ \cite{Mal2} introduced the concept of numerical radius Birkhoff orthogonality for Hilbert space operators, as follows.
\begin{definition}\cite[Definition 1.1 and 1.2]{Mal2}
	Let $H$ be a Hilbert space.
	An element $T \in \mathbb{B}(H)$ is called \emph{numerical radius Birkhoff orthogonal} to another  element $S \in \mathbb{B}(H)$, denoted by $T \perp_{\omega B} S$, if $\omega(T+\alpha S)\geq \omega(T)$ for all $\alpha\in\mathbb{F}$.
\end{definition}

As an application, they derive numerical radius inequalities for bounded linear operators on a complex Hilbert space, yielding improved lower bounds for the numerical radius of a matrix compared to existing estimates.

We now extend the concept of numerical radius Birkhoff orthogonality, which was originally defined on $\mathbb{B}(H)$, to $\mathbb{B}(X)$.
\begin{definition}
	An element $T \in \mathbb{B}(X)$ is called \emph{numerical radius Birkhoff orthogonal} to another  element $S \in \mathbb{B}(X)$, denoted by $T \perp_{vB} S$, if $v(T+\alpha S)\geq v(T)$ for all $\alpha\in\mathbb{F}$.
\end{definition}
\begin{remark}
	Since $A\mapsto v(A)$ is a semi-norm on $\mathbb{B}(X)$, the condition $T \perp_{vB} S$ holds whenever $v(T)v(S)=0$.
\end{remark}

The following proposition follows directly form the definition of the numerical radius Birkhoff orthogonality.
\begin{Proposition}
	Numerical radius Birkhoff orthogonality possesses homogeneity, that is, if $T,S\in \mathbb{B}(X)$ satisfy $T \perp_{vB} S$, then $\alpha T \perp_{vB}\beta S$ for all $\alpha,\beta\in\mathbb{F}$.
\end{Proposition}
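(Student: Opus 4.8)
The plan is to reduce everything to the defining inequality for $T \perp_{vB} S$ by exploiting the fact that $v$ is a semi-norm, hence absolutely homogeneous: $v(\lambda A) = |\lambda|\,v(A)$ for every $\lambda \in \mathbb{F}$ and $A \in \mathbb{B}(X)$. Unravelling the definition, the goal is to verify that $v(\alpha T + \gamma\beta S) \geq v(\alpha T)$ for all $\gamma \in \mathbb{F}$, given that $v(T + \delta S) \geq v(T)$ holds for all $\delta \in \mathbb{F}$.

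First I would dispose of the degenerate cases separately, since the homogeneity argument below implicitly requires a nonzero scalar. If $\alpha = 0$, then $v(\alpha T + \gamma\beta S) \geq 0 = v(0) = v(\alpha T)$ for every $\gamma$; if $\beta = 0$, then $v(\alpha T + \gamma\cdot 0) = v(\alpha T)$, again giving the inequality immediately. Thus the only substantive case is $\alpha \neq 0$ and $\beta \neq 0$.

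In this main case the key step is the factorization $\alpha T + \gamma\beta S = \alpha\bigl(T + \tfrac{\gamma\beta}{\alpha} S\bigr)$, combined with the homogeneity of $v$:
$$v(\alpha T + \gamma\beta S) = |\alpha|\, v\!\left(T + \tfrac{\gamma\beta}{\alpha} S\right).$$
Since $\beta/\alpha \neq 0$, as $\gamma$ ranges over $\mathbb{F}$ the scalar $\delta := \gamma\beta/\alpha$ ranges over all of $\mathbb{F}$ as well, so the hypothesis $T \perp_{vB} S$ supplies $v\bigl(T + \tfrac{\gamma\beta}{\alpha} S\bigr) \geq v(T)$. Putting the pieces together gives $v(\alpha T + \gamma\beta S) \geq |\alpha|\,v(T) = v(\alpha T)$ for every $\gamma \in \mathbb{F}$, which is exactly the assertion $\alpha T \perp_{vB} \beta S$.

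I do not expect any genuine obstacle here: the argument is a one-line computation built on absolute homogeneity of $v$. The only point requiring care is ensuring that the auxiliary scalar $\delta$ sweeps out all of $\mathbb{F}$, which is precisely what breaks down when $\alpha$ or $\beta$ vanishes, and is the reason those two cases are treated first.
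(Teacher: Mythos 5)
Your proof is correct and is exactly the ``direct from the definition'' argument the paper has in mind (the paper states the proposition without proof, as an immediate consequence of the definition and the absolute homogeneity of the semi-norm $v$). One minor remark: only the case $\alpha=0$ genuinely needs separate treatment (to avoid dividing by $\alpha$), and the observation that $\delta=\gamma\beta/\alpha$ sweeps out all of $\mathbb{F}$ is not needed --- for each fixed $\gamma$ you merely need $\delta\in\mathbb{F}$ so that the hypothesis applies, so the $\beta=0$ case is already covered by the main computation.
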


A natural question then arises regarding its left additivity and right additivity, i.e., whether the following implications are true:
$$T\perp_{vB}R,\;S\perp_{vB}R\;\Rightarrow\;T+S\perp_{vB}R,$$
$$T\perp_{vB}S,\;T\perp_{vB}R\;\Rightarrow\;T\perp_{vB}S+R.$$
The answer is negative. More precisely, we will establish the following theorem.
\begin{theorem}\label{pva}
	Let $X$ be a normed space of dimension at least 2. Then the numerical radius Birkhoff orthogonality in $\mathbb{B}(X)$ is neither additive on the left nor on the right.
\end{theorem}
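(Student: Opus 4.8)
The plan is to encode every instance of $T\perp_{vB}S$ in terms of the spatial numerical range $V(A):=\{z^*(Az):z\in S_X,\ z^*\in J(z)\}$, for which $v(A)=\sup\{|\mu|:\mu\in V(A)\}$. Two elementary facts will do the heavy lifting. First, for a fixed pair $(z,z^*)$ one has $z^*((A+B)z)=z^*(Az)+z^*(Bz)$, so $V(A+B)\subseteq V(A)+V(B)$; specializing $B=\alpha I$ and using $z^*(\alpha Iz)=\alpha$ gives the exact identity $V(I+\alpha A)=\{1+\alpha\mu:\mu\in V(A)\}$, hence $v(I+\alpha A)=\sup_{\mu\in V(A)}|1+\alpha\mu|$. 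Second, since $\dim X\ge 2$, any $x^*\in S_{X^*}$ has a nontrivial kernel, so for $x\in S_X$, $x^*\in J(x)$ and $P:=x^*(\cdot)x$ one has $\{0,1\}\subseteq V(P)$: the value $1$ is attained at $(x,x^*)$ and the value $0$ at $(z_0,z_0^*)$ for any unit $z_0\in\ker x^*$ with $z_0^*\in J(z_0)$.

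For the failure of right additivity I would use the single left pivot $T=I$. With $P=x^*(\cdot)x$ as above, set $S:=3P-I$ and $R:=2I-3P$, so that $S+R=I$. From $\{0,1\}\subseteq V(P)$ we get $\{-1,2\}\subseteq V(S)$ and $\{-1,2\}\subseteq V(R)$, whence for every $\alpha\in\mathbb{F}$, $v(I+\alpha S)\ge\max\{|1-\alpha|,|1+2\alpha|\}\ge 1=v(I)$: indeed $|1-\alpha|<1$ forces $\operatorname{Re}\alpha>0$ while $|1+2\alpha|<1$ forces $\operatorname{Re}\alpha<0$, so the two moduli cannot both be below $1$ (this covers both $\mathbb{F}=\mathbb{R}$ and $\mathbb{F}=\mathbb{C}$). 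Thus $I\perp_{vB}S$, and symmetrically $I\perp_{vB}R$. Yet $v(I+\alpha(S+R))=v((1+\alpha)I)=|1+\alpha|$ equals $0<1=v(I)$ at $\alpha=-1$, so $I\not\perp_{vB}S+R$. This refutes right additivity over an arbitrary $X$.

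For the failure of left additivity the natural device is to make the conclusion fail automatically by arranging $T+S=R$ with $v(R)>0$: then $v(R+\alpha R)=|1+\alpha|\,v(R)$ drops to $0<v(R)$ at $\alpha=-1$, so $T+S=R\not\perp_{vB}R$ regardless of everything else. It then remains to produce $T,S$ with $T+S=R$ and $T\perp_{vB}R$, $S\perp_{vB}R$. Here the key sufficient condition is attainment: if $v(A)$ is attained at a pair $(z,z^*)$ with $z^*(Rz)=0$, then $v(A+\alpha R)\ge|z^*(Az)+\alpha z^*(Rz)|=|z^*(Az)|=v(A)$ for all $\alpha$, i.e. $A\perp_{vB}R$. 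Taking $R=P=x^*(\cdot)x$ and $S:=y^*(\cdot)y$ with $y\in\ker x^*\cap S_X$, $y^*\in J(y)$, one verifies universally that $v(S)=1$ is attained at $(y,y^*)$ and that $y^*(Py)=x^*(y)y^*(x)=0$, so $S\perp_{vB}P$ holds in every $X$. The decisive remaining relation is $T=P-S\perp_{vB}P$: the value of $V(T)$ at the annihilating pair $(y,y^*)$ is $-1$, so the attainment argument closes provided $v(T)=v(P-S)=1$, i.e. provided the supremum defining $v(P-S)$ is attained along $\ker x^*$.

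The hard part is exactly this last point. Unlike $v(P)=1$ and $v(S)=1$, the quantity $v(P-S)=\sup_{z,z^*}\big|\,x^*(z)z^*(x)-y^*(z)z^*(y)\,\big|$ is a supremum over all of $S_X$, and it can a priori exceed $1$ if some single pair drives the two terms close to $+1$ and $-1$ simultaneously; whether this occurs is dictated by the global geometry of $X$. I would control it by passing to a $2$-dimensional subspace $Y=\mathop{\mathrm{span}}\{x,y\}$, choosing $x$ to be a smooth point of $Y$ (smooth points are dense by the finite-dimensional Rademacher theorem, cf. the discussion preceding Theorem~\ref{Tpt}), fixing the biorthogonal data $x^*(x)=1$, $x^*(y)=0$, $y^*(y)=1$, and extending by Hahn--Banach, so that the uniqueness of the supporting functional forces the extremal pairs for $v(P-S)$ onto $\ker x^*$. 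Carrying out this localization, and thereby establishing $T\perp_{vB}P$, is the crux of the argument; once it is in place, the relations $S\perp_{vB}P$ and $T+S=P\not\perp_{vB}P$ are immediate from the remarks above, and left additivity fails for an arbitrary $X$.
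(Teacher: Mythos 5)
Your right-additivity argument is complete and correct, and it takes a mildly different route from the paper's: the paper proves a small lemma ($I\perp_{vB}A$ whenever $A$ is not injective, by evaluating at a unit vector that $A$ kills) and applies it to $A=P$ and $A=I-P$, where $P=x^*(\cdot)x$, so that $I\perp_{vB}P$ and $I\perp_{vB}I-P$ while $P+(I-P)=I$ and $I\not\perp_{vB}I$. Your choice $S=3P-I$, $R=2I-3P$, together with the identity $V(I+\alpha A)=1+\alpha V(A)$ and the sign argument on $\{-1,2\}\subseteq V(S)\cap V(R)$, achieves the same end and is valid in any normed space of dimension at least $2$.

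The left-additivity half, however, has a genuine gap, and it sits exactly where you flagged it: the relation $T=P-S\perp_{vB}P$ is never established, and it is in fact \emph{false} for general admissible data, so no soft argument can close it. Concretely, take $X=\ell_\infty^3$ (real), $x=(1,-1,1)$, $x^*=(1,0,0)\in J(x)$, $y=(0,1,1)\in\ker x^*\cap S_X$, $y^*=(0,0,1)\in J(y)$. Then $(P-S)x=x-y=(1,-2,0)$, and the functional $z^*=(0,-1,0)$ lies in $J(x)$, so $v(P-S)\ge z^*\big((P-S)x\big)=2$, whence $v(P-S)=2$; but at $\alpha=-1$ one has $v\big((P-S)-P\big)=v(S)=1<2=v(P-S)$, so $P-S\not\perp_{vB}P$. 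Thus your attainment criterion applied at $(y,y^*)$, which only produces the value $-1$, cannot reach $v(P-S)$ in general. Nor is the proposed smoothness/Hahn--Banach localization a routine patch: $v(P-S)$ is a supremum over \emph{all} pairs $(z,z^*)$ with $z\in S_X$, and smoothness of $x$ inside the two-dimensional $Y=\operatorname{span}\{x,y\}$ gives no control over pairs with $z\notin Y$, nor over how supporting functionals behave off $Y$. As written, the failure of left additivity is unproved.

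The paper avoids this difficulty entirely by a better choice of the three operators: keep your attainment criterion, but make the right-hand operator annihilate a vector at which \emph{both} left-hand numerical radii are witnessed. Take $T=I$, $S=-P$, $R=I-P$, so $T+S=R$. Since $Rx=0$ while $|x^*(Tx)|=1=v(T)$ and $|x^*(Sx)|=1=v(S)$, the attainment criterion gives $T\perp_{vB}R$ and $S\perp_{vB}R$ in every normed space; yet $v(R)\ge\big|y^*\big((I-P)y\big)\big|=1>0$ for $y\in\ker x^*\cap S_X$, so $T+S=R\not\perp_{vB}R$. This replaces the hard global estimate $v(P-S)=1$ by trivial one-point evaluations, which is the idea your construction is missing.
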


To establish this theorem, we need the following proposition.
\begin{Proposition}
	Let $X$ be a normed space. Then $I\perp_{vB}T$ whenever $T\in\mathbb{B}(X)$ is not injective.
\end{Proposition}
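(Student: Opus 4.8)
The plan is to exploit the kernel of $T$ directly, so that the perturbation $\alpha T$ becomes invisible on a suitable unit vector. First I would record the elementary fact that $v(I)=1$: for any $x\in S_X$ and any $x^*\in J(x)$ one has $x^*(Ix)=x^*(x)=\|x\|=1$, so every admissible pair $(x,x^*)$ contributes exactly $1$ to the supremum defining the numerical radius, whence $v(I)=1$.

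Next, since $T$ is not injective, its kernel contains a nonzero vector; after normalizing I may choose $x_0\in S_X$ with $Tx_0=0$. By the Hahn--Banach theorem select $x_0^*\in J(x_0)$, i.e. $x_0^*\in S_{X^*}$ with $x_0^*(x_0)=\|x_0\|=1$. For an arbitrary scalar $\alpha\in\mathbb{F}$ I would then compute
$$x_0^*\big((I+\alpha T)x_0\big)=x_0^*(x_0)+\alpha\,x_0^*(Tx_0)=1+\alpha\cdot 0=1.$$
Since $(x_0,x_0^*)$ is an admissible pair in the definition of $v$, this gives $v(I+\alpha T)\ge \big|x_0^*((I+\alpha T)x_0)\big|=1=v(I)$.

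As $\alpha\in\mathbb{F}$ was arbitrary, the inequality $v(I+\alpha T)\ge v(I)$ holds for all $\alpha$, which is precisely the assertion $I\perp_{vB}T$. There is essentially no obstacle here: the whole argument rests on producing a single unit vector annihilated by $T$, together with a supporting functional at that vector, so that $I+\alpha T$ acts on it exactly as $I$ does. The only points requiring minor care are the normalization of the kernel vector to lie in $S_X$ and the routine verification $v(I)=1$, both of which are immediate.
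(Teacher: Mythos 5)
Your proof is correct and follows exactly the same route as the paper's: pick a unit vector in the kernel of $T$, take a supporting functional from $J(x_0)$, and observe that $I+\alpha T$ agrees with $I$ on that pair, giving $v(I+\alpha T)\ge 1=v(I)$. The only difference is that you spell out the routine verifications ($v(I)=1$, normalization, Hahn--Banach) that the paper leaves implicit.
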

\begin{proof}
	There exists $x\in S_X$ such that $Tx=0$. Let $x^*\in J(x)$. Then for any $\alpha\in\mathbb{F}$, we have
	$$v\left(I+\alpha T\right)\geq x^*\left[\left(I+\alpha T\right)(x)\right]=1=v(I),$$
	leading to the desired result.
\end{proof}

\begin{proof}[Proof of Theorem \ref{pva}]
	Let $x\in S_X$ and $x^*\in J(x)$. Clearly, $x^*(\cdot)x$ and $I-x^*(\cdot)x$ are not injective. We can apply the previous proposition to arrive at
	$$I\perp_{vB}x^*(\cdot)x,\quad I\perp_{vB}I-x^*(\cdot)x.$$
	Since $v(I-I)=0<1=v(I)$, i.e., $I\not\perp_{vB}I$, we conclude that the numerical radius Birkhoff orthogonality in $\mathbb{B}(X)$ is not additive on the right. 
	
	On the other hand, suppose for contrary that the numerical radius Birkhoff orthogonality in $\mathbb{B}(X)$ is additive on the left. Observe that
\begin{align*}
	v\Big(-x^*(\cdot)x+\alpha\left[I-x^*(\cdot)x\right]\Big)&\geq\big|-x^*(x)x^*(x)+\alpha\left[x^*(x)-x^*(x)x^*(x)\right]\big|\\
	&=1=v\Big(-x^*(\cdot)x\Big)
\end{align*}
    for any $\alpha\in\mathbb{F}$. Thus $-x^*(\cdot)x\perp_{vB}I-x^*(\cdot)x$. Since $I\perp_{vB}I-x^*(\cdot)x$, we obtain
    $$I-x^*(\cdot)x\perp_{vB}I-x^*(\cdot)x.$$
    Notice that there exists $y\in S_X$ such that $x^*(y)=0$. Let $y^*\in J(y)$. However,
\begin{align*}
	v\Big(\left[I-x^*(\cdot)x\right]-\left[I-x^*(\cdot)x\right]\Big)=0<1=\big|y^*(y)-x^*(y)y^*(x)\big|\leq v\Big(I-x^*(\cdot)x\Big),
\end{align*}
    a contradiction.
\end{proof}

We now establish a characterization of numerical radius Birkhoff orthogonality in $\mathbb{B}(X)$, which generalizes the characterizations of numerical radius Birkhoff orthogonality for Hilbert space operators \cite[Theorem 2.3 and 2.5]{Mal2}.
\begin{theorem}\label{vo1}
	The condition $T \perp_{vB} S$ holds if and only if for any $\lambda \in \mathbb{T}$ there exist $\{x_{\lambda,n}\}_{n=1}^{\infty}\subset S_X$ and $\{x^{*}_{\lambda,n}\}_{n=1}^{\infty}\subset S_{X^*}$ with $x^*_{\lambda,n}(x_{\lambda,n})=1$ for all $n\in\mathbb{Z}_{\geq1}$, such that the following conditions hold:
	$$\lim_{n\to\infty}\big|x^{*}_{\lambda,n}\left(Tx_{\lambda,n}\right)\big|=v(T),\quad \lim_{n\to\infty}\mathrm{\mathop{Re}}\left[\lambda \overline{x^{*}_{\lambda,n}\left(Tx_{\lambda,n}\right)}x^{*}_{\lambda,n}\left(Sx_{\lambda,n}\right)\right]\geq0.$$
\end{theorem}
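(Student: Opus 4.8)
The plan is to characterize $T\perp_{vB}S$ by translating the inequality $v(T+\alpha S)\geq v(T)$ into a statement about the approximating sequences that compute $v(T)$. First I would fix an arbitrary $\lambda\in\mathbb{T}$ and parametrize the perturbation direction as $\alpha=t\lambda$ with $t\geq 0$ small. For each such $t$, the definition of $v(T+t\lambda S)$ supplies sequences $\{x_n\}\subset S_X$ and $\{x^*_n\}\subset S_{X^*}$ with $x^*_n(x_n)=1$; the key algebraic identity I would exploit is the expansion
\begin{align*}
\big|x^*_n\big[(T+t\lambda S)x_n\big]\big|^2
=\big|x^*_n(Tx_n)\big|^2+2t\,\mathrm{\mathop{Re}}\big[\lambda\,\overline{x^*_n(Tx_n)}\,x^*_n(Sx_n)\big]+t^2\big|x^*_n(Sx_n)\big|^2,
\end{align*}
which isolates the real-part term appearing in the statement as the first-order coefficient in $t$. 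The orthogonality hypothesis should force this first-order coefficient to be nonnegative in the limit, which is exactly the second displayed condition.

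For the forward direction ($T\perp_{vB}S\Rightarrow$ the sequence condition), I would argue by contradiction: if no admissible sequence achieved both $\lim|x^*_n(Tx_n)|=v(T)$ and the nonnegativity of the $\mathrm{\mathop{Re}}$-term, then along every maximizing sequence for $v(T)$ the real-part term would be bounded above by a strictly negative constant $-\delta$. Plugging a small $t>0$ into the identity above and taking suprema would then yield $v(T+t\lambda S)^2\leq v(T)^2-t\delta+t^2 v(S)^2$, which is strictly less than $v(T)^2$ for $t$ small enough, contradicting $v(T+t\lambda S)\geq v(T)$. Care is needed to pass from "every maximizing sequence fails" to a uniform gap $\delta$; I would phrase this cleanly using the definition of $v$ as a supremum and choosing $x_n,x^*_n$ nearly optimal for $v(T+t\lambda S)$, then showing such near-optimizers are forced to nearly maximize $|x^*_n(Tx_n)|$ as well.

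For the reverse direction, given the sequence condition for a fixed $\lambda$, I would show $v(T+\alpha S)\geq v(T)$ for $\alpha=t\lambda$, $t\geq 0$, using the inequality $|x^*_n[(T+t\lambda S)x_n]|^2\geq |x^*_n(Tx_n)|^2+2t\,\mathrm{\mathop{Re}}[\cdots]$ (dropping the nonnegative $t^2$ term) and taking limits, so that $v(T+t\lambda S)^2\geq v(T)^2$. To cover all $\alpha\in\mathbb{F}$ rather than only nonnegative real multiples of each $\lambda$, I would write an arbitrary $\alpha\neq 0$ as $t\lambda$ with $t=|\alpha|>0$ and $\lambda=\alpha/|\alpha|\in\mathbb{T}$, and invoke the hypothesis for precisely this $\lambda$; the case $\alpha=0$ is trivial. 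The main obstacle I anticipate is the forward direction's uniformity argument—extracting a single sequence that simultaneously maximizes $|x^*_n(Tx_n)|$ and controls the cross term—since $v$ is only a supremum (not attained in general on infinite-dimensional $X$), so a careful diagonal/subsequence extraction over $t\downarrow 0$, with the $t^2$ error absorbed, will be the delicate point.
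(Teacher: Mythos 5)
Your proposal is correct and takes essentially the same route as the paper's proof: the same modulus-squared expansion of $\big|x^*_n\big[(T+t\lambda S)x_n\big]\big|^2$, perturbation along $\alpha=t\lambda$ with $t\downarrow 0$, near-optimizers for $v(T+t\lambda S)$ which are then shown to nearly maximize $\big|x^*_n(Tx_n)\big|$, and an identical argument for the sufficiency via $\alpha=|\alpha|\lambda$. The uniformity issue you flag as delicate is resolved in the paper by an explicit scaling — taking $t_n=1/\sqrt{n}$ together with near-optimizers accurate to the factor $\tfrac{n}{n+1}$, so that the optimization error divided by $t_n$ vanishes and the lower bound on the cross term tends to zero.
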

\begin{proof}
	Suppose that $T \perp_{vB} S$. Then for any $\lambda \in \mathbb{T}$ and all $n\in\mathbb{Z}_{\geq1}$, we have
	$$v\left(T+\frac{\lambda}{\sqrt{n}} S\right)\geq v(T).$$
	Observe that, for each $n\in\mathbb{Z}_{\geq1}$, there exist $x_{\lambda,n}\in S_X$ and $x^{*}_{\lambda,n}\in S_{X^*}$ such that $x^*_{\lambda,n}(x_{\lambda,n})=1$ and
	$$\left| x^{*}_{\lambda,n}\left[ \left(T+\frac{\lambda}{\sqrt{n}} S\right)(x_{\lambda,n})\right]\right|\geq \frac{n}{n+1}v\left(T+\frac{\lambda}{\sqrt{n}} S\right)\geq \frac{n}{n+1}v(T).$$
	It follows that 
\begin{align*}
	&\big|x^{*}_{\lambda,n}\left(Tx_{\lambda,n}\right)\big|\geq\left| x^{*}_{\lambda,n}\left[ \left(T+\frac{\lambda}{\sqrt{n}} S\right)(x_{\lambda,n})\right]\right|-\frac{\big|x^{*}_{\lambda,n}\left(Sx_{\lambda,n}\right)\big|}{\sqrt{n}}\\
	\geq&\frac{n}{n+1}v(T)-\frac{v(S)}{\sqrt{n}},\\
	&\mathrm{\mathop{Re}}\left[\lambda \overline{x^{*}_{\lambda,n}\left(Tx_{\lambda,n}\right)}x^{*}_{\lambda,n}\left(Sx_{\lambda,n}\right)\right]\\
	=&\frac{\sqrt{n}}{2}\left\lbrace \left| x^{*}_{\lambda,n}\left[ \left(T+\frac{\lambda}{\sqrt{n}} S\right)(x_{\lambda,n})\right]\right|^2-\big|x^{*}_{\lambda,n}\left(Tx_{\lambda,n}\right)\big|^2-\frac{1}{n}\big|x^{*}_{\lambda,n}\left(Sx_{\lambda,n}\right)\big|^2\right\rbrace\\
	\geq&\frac{\sqrt{n}}{2}\left\lbrace \left[\frac{n}{n+1}v(T)\right]^2 -v(T)^2-\frac{v(S)^2}{n}\right\rbrace=-\frac{(2n+1)\sqrt{n}}{2(n+1)^2}v(T)^2-\frac{v(S)^2}{2\sqrt{n}}.
\end{align*}
    Since the sequences $\left\{x^{*}_{\lambda,n}\left(Tx_{\lambda,n}\right)\right\}$ and $\left\{x^{*}_{\lambda,n}\left(Sx_{\lambda,n}\right)\right\}$are bounded, by passing to a subsequence, we can assume that they converge. Consequently,
    $$\lim_{n\to\infty}\big|x^{*}_{\lambda,n}\left(Tx_{\lambda,n}\right)\big|\geq v(T),\quad \lim_{n\to\infty}\mathrm{\mathop{Re}}\left[\lambda \overline{x^{*}_{\lambda,n}\left(Tx_{\lambda,n}\right)}x^{*}_{\lambda,n}\left(Sx_{\lambda,n}\right)\right]\geq0.$$
    Since $\lim_{n\to\infty}\big|x^{*}_{\lambda,n}\left(Tx_{\lambda,n}\right)\big|\leq v(T)$ holds, we establish the necessity.
    
    It remains to prove the sufficiency. For any $\alpha\in \mathbb{F}$, there exists $\lambda \in \mathbb{T}$ such that $\alpha=|\alpha|\lambda$. As a result,
\begin{align*}
	v(T+\alpha S)^2\geq&\limsup_{n\to\infty}\big|x^{*}_{\lambda,n}\left[ \left(T+|\alpha|\lambda S\right)(x_{\lambda,n})\right]\big|^2\\
	=&\limsup_{n\to\infty}\left\lbrace  \big|x^{*}_{\lambda,n}\left(Tx_{\lambda,n}\right)\big|^2+|\alpha|^2|x^{*}_{\lambda,n}\left(Sx_{\lambda,n}\right)\big|^2\right.\\
	&\left.+2|\alpha|\mathrm{\mathop{Re}}\left[\lambda \overline{x^{*}_{\lambda,n}\left(Tx_{\lambda,n}\right)}x^{*}_{\lambda,n}\left(Sx_{\lambda,n}\right)\right]\right\rbrace\\
	\geq&\lim_{n\to\infty}\big|x^{*}_{\lambda,n}\left(Tx_{\lambda,n}\right)\big|^2= v(T)^2,
\end{align*}
    i.e., $v(T+\alpha S)\geq v(T)$. Therefore, $T \perp_{vB} S$ holds.
\end{proof}

The following theorem, analogies to Theorem \ref{T-1}, provides a characterization of numerical radius parallelism in terms of numerical radius Birkhoff orthogonality.
\begin{theorem}
	Let $X$ be a normed space and $T,S\in\mathbb{B}(X)$. Then $T\parallel_{v}S$ if and only if there exists $\lambda\in\mathbb{T}$ such that $T\perp_{vB}v(S)T+\lambda v(T)S$.
\end{theorem}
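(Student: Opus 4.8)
The plan is to reduce everything to the two sequence-level characterizations already in hand: Theorem \ref{v1} for $\parallel_v$ and Theorem \ref{vo1} for $\perp_{vB}$. Write $R_\lambda := v(S)T + \lambda v(T)S$ for $\lambda \in \mathbb{T}$. First I would dispose of the degenerate case $v(T)v(S) = 0$: here $T \parallel_v S$ holds automatically (as noted after the definition of $\parallel_v$, $v(T)v(S)=0$ forces parallelism), and one checks directly that $T \perp_{vB} R_\lambda$ for some $\lambda$ — if $v(T)=0$ then $R_\lambda = v(S)T$ and $v(T+\alpha R_\lambda) \geq 0 = v(T)$ trivially, while if $v(S)=0$ then $R_\lambda = \lambda v(T)S$ and the seminorm inequality $v(T+\beta S) \geq v(T) - |\beta|v(S) = v(T)$ gives $T \perp_{vB} R_\lambda$. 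Thus both sides hold, and I may henceforth assume $v(T)v(S) \neq 0$.

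For the forward implication, assume $T \parallel_v S$. The subsequence extraction in the proof of $(\mathrm{iii})\Rightarrow(\mathrm{i})$ of Theorem \ref{v1} supplies $\{x_n\}\subset S_X$, $\{x_n^*\}\subset S_{X^*}$ with $x_n^*(x_n)=1$ and $x_n^*(Tx_n) \to v(T)\mu_1$, $x_n^*(Sx_n) \to v(S)\mu_2$ for some $\mu_1,\mu_2 \in \mathbb{T}$. I would then make the crucial choice $\lambda := -\mu_1\overline{\mu_2} \in \mathbb{T}$. A short computation gives $x_n^*(R_\lambda x_n) = v(S)x_n^*(Tx_n) + \lambda v(T)x_n^*(Sx_n) \to v(T)v(S)(\mu_1 + \lambda\mu_2)$, hence $\overline{x_n^*(Tx_n)}\,x_n^*(R_\lambda x_n) \to v(T)^2 v(S)(1 + \lambda\overline{\mu_1}\mu_2) = 0$, the last equality being exactly what the choice of $\lambda$ arranges. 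Consequently, for every $\mu \in \mathbb{T}$ the single sequence $\{x_n\}$ satisfies $\lim_n |x_n^*(Tx_n)| = v(T)$ and $\lim_n \mathrm{Re}[\mu\,\overline{x_n^*(Tx_n)}\,x_n^*(R_\lambda x_n)] = 0 \geq 0$, so the sufficiency half of Theorem \ref{vo1} yields $T \perp_{vB} R_\lambda$.

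For the reverse implication, assume $T \perp_{vB} R_\lambda$ for some $\lambda \in \mathbb{T}$. I would apply the necessity half of Theorem \ref{vo1} at the single direction $\mu = -1$, obtaining sequences with $|x_n^*(Tx_n)| \to v(T)$ and $\lim_n \mathrm{Re}[-\overline{x_n^*(Tx_n)}\,x_n^*(R_\lambda x_n)] \geq 0$. Writing $a_n := x_n^*(Tx_n)$ and $b_n := x_n^*(Sx_n)$ and expanding $\overline{a_n}\,x_n^*(R_\lambda x_n) = v(S)|a_n|^2 + \lambda v(T)\overline{a_n}b_n$, the orthogonality inequality becomes $v(S)|a_n|^2 + v(T)\mathrm{Re}[\lambda\overline{a_n}b_n] \leq 0$ in the limit; since $|a_n|^2 \to v(T)^2$ and $v(T) > 0$, this forces $\lim_n \mathrm{Re}[\lambda\overline{a_n}b_n] \leq -v(T)v(S)$. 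On the other hand $\mathrm{Re}[\lambda\overline{a_n}b_n] \geq -|a_n||b_n| \geq -v(T)v(S)$, so a squeeze gives $\lim_n |a_n||b_n| = v(T)v(S)$, i.e. condition $(\mathrm{ii})$ of Theorem \ref{v1}, whence $T \parallel_v S$.

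The only genuinely delicate point is the universal quantifier ``for all $\mu \in \mathbb{T}$'' in Theorem \ref{vo1}. In the forward direction this is what dictates the choice of $\lambda$: I need $\overline{x_n^*(Tx_n)}\,x_n^*(R_\lambda x_n) \to 0$ so that one fixed sequence meets the requirement for every $\mu$ simultaneously, and $\lambda = -\mu_1\overline{\mu_2}$ is precisely the value killing this limit. In the reverse direction the same quantifier is an asset: specializing to $\mu = -1$ is exactly the direction that converts the orthogonality inequality into the lower bound needed to push $|a_n||b_n|$ up to $v(T)v(S)$. Beyond this, I would only need to confirm that the relevant limits exist (passing to subsequences, or arguing with $\liminf$ and $\limsup$ in the squeeze) and that $\lambda = -\mu_1\overline{\mu_2}$ lies in $\mathbb{T}$, which is immediate since $|\mu_1||\mu_2| = 1$.
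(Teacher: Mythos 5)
Your proof is correct and takes essentially the same route as the paper: both directions run through Theorems \ref{v1} and \ref{vo1}, with the identical choice $\lambda=-\mu_1\overline{\mu_2}$ (the paper writes it as $-\lim_n \frac{v(S)x_n^*(Tx_n)}{v(T)x_n^*(Sx_n)}$, which is the same quantity) and a single sequence serving all directions $\mu\in\mathbb{T}$ in the forward implication, and the same specialization to the direction $-1$ in the reverse. The only cosmetic differences are that you conclude via condition (ii) of Theorem \ref{v1} by a squeeze on $|a_n||b_n|$ where the paper normalizes the phase and lands on condition (iii), and that you spell out the degenerate case $v(T)v(S)=0$ which the paper dismisses without detail.
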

\begin{proof}
	Without lose of generality, we assume that $v(T)v(S)\neq0$.
	
	Suppose that $T\parallel_{v}S$. Then by Theorem \ref{v1}, there exist sequences $\{x_n\}\subset S_X$ and $\{x^*_n\}\subset S_{X^*}$ with $x^*_n(x_n)=1$ for all $n\in\mathbb{Z}_{\geq1}$ such that
	$$\lim_{n\to\infty}\big|x^*_n\left(Tx_n\right)\big|=v(T)\quad \text{and} \quad\lim_{n\to\infty}\big|x^*_n\left(Sx_n\right)\big|=v(S).$$
	By passing to a subsequence, we may assume that $\{x^*_n\left(Tx_n\right)\}$ and $\{x^*_n\left(Sx_n\right)\}$ exist.
	Let
	$$\lambda=-\lim_{n\to\infty}\frac{v(S)x^*_n\left(Tx_n\right)}{v(T)x^*_n\left(Sx_n\right)}.$$
	Then $\lambda\in\mathbb{T}$. For every $\mu\in\mathbb{T}$ and $n\in\mathbb{Z}_{\geq1}$, let $x_{\mu,n}=x_n$ and $x^*_{\mu,n}=x^*_n$. It follows that
	$$\lim_{n\to\infty}x^{*}_{\mu,n}\left(v(S)T+\lambda v(T)S\right)\left(x_{\mu,n}\right)=0$$
	for all $\mu\in\mathbb{T}$. Thus
	$$\lim_{n\to\infty}\big|x^{*}_{\mu,n}\left(Tx_{\mu,n}\right)\big|=v(T),$$
	$$\lim_{n\to\infty}\mathrm{\mathop{Re}}\left[\mu \overline{x^{*}_{\mu,n}\left(Tx_{\mu,n}\right)}x^{*}_{\mu,n}\left(v(S)T+\lambda v(T)S\right)\left(x_{\mu,n}\right)\right]=0,$$
	for all $\mu\in\mathbb{T}$. Applying Theorem \ref{vo1}, we obtain $T\perp_{vB}v(S)T+\lambda v(T)S$.
	
	Conversely, suppose that there exists $\lambda\in\mathbb{T}$ such that $T\perp_{vB}v(S)T+\lambda v(T)S$. By Theorem \ref{vo1}, there exist sequences $\{x_n\}\subset S_X$ and $\{x^*_n\}\subset S_{X^*}$ with $x^*_n(x_n)=1$ for all $n\in\mathbb{Z}_{\geq1}$ such that
	$$\lim_{n\to\infty}\big|x^*_n\left(Tx_n\right)\big|=v(T),$$
	$$\lim_{n\to\infty}\mathrm{\mathop{Re}}\left[- \overline{x^{*}_{n}\left(Tx_{n}\right)}x^{*}_{n}\left(v(S)T+\lambda v(T)S\right)\left(x_{,n}\right)\right]\geq0.$$
	By passing to a subsequence, we may assume that $\{x^*_n\left(Tx_n\right)\}$ and $\{x^*_n\left(Sx_n\right)\}$ exist. Let 
	$$\mu=\lim_{n\to\infty}\frac{x^*_n\left(Tx_n\right)}{v(T)}.$$
	Then $\mu\in\mathbb{T}$ and
\begin{align*}
	&\mathrm{\mathop{Re}}\left[-\lambda\overline{\mu}v(T)^2\lim_{n\to\infty}x^*_n\left(Sx_n\right)-v(T)^2v(S)\right]\\
	=&\lim_{n\to\infty}\mathrm{\mathop{Re}}\left[- \overline{x^{*}_{n}\left(Tx_{n}\right)}x^{*}_{n}\left(v(S)T+\lambda v(T)S\right)\left(x_{,n}\right)\right]\geq0.
\end{align*}
    Thus
    $$v(S)\leq\mathrm{\mathop{Re}}\left[-\lambda\overline{\mu}\lim_{n\to\infty}x^*_n\left(Sx_n\right)\right]\leq \lim_{n\to\infty}\big|x^*_n\left(Sx_n\right)\big|\leq v(S),$$
    leading to
    $$\lim_{n\to\infty}\big|x^*_n\left(Sx_n\right)\big|=v(S).$$
    Consequently, we can invoke Theorem \ref{v1} to conclude that $T\parallel_{v}S$.
\end{proof}



$$\\$$
$\mathbf {Data~ Availability}$
$$\\$$
No data were used to support this study.
$$\\$$
$\mathbf{Conflicts ~of~ Interest}$
$$\\$$
The author(s) declare(s) that there is no conflict of interest regarding the publication of this paper.
$$\\$$
$\mathbf{Funding~ Statement}$
$$\\$$
This work was supported by the National Natural Science Foundation
of P. R. China (No.11971493 and 12071491).

\bibliographystyle{plain}

\bibliography{2024-11-14}

\end{document}